\theoremstyle{plain}
\newtheorem{theorem}{Theorem}
\newtheorem{lemma}[theorem]{Lemma}
\newtheorem{proposition}[theorem]{Proposition}
\theoremstyle{definition}
\newtheorem{definition}[theorem]{Definition}
\newtheorem{problem}[theorem]{Problem}
\theoremstyle{remark}
\title{Partial Petrial polynomials for complete graphs and paths}
\author{
Qi Yan\footnote{Corresponding author.}\\
\small School of Mathematics and Statistics\\[-0.8ex]
\small Lanzhou University\\[-0.8ex]
\small P. R. China\\
Yuancheng Li\\
\small School of Mathematics (Zhuhai)\\[-0.8ex]
\small Sun Yat-sen University\\[-0.8ex]
\small P. R. China\\
\small{\tt Email:yanq@lzu.edu.cn; liycheng2022@163.com}
}
\date{}
\journal{Discrete Applied Mathematics}
\begin{document}
\begin{abstract}
Recently, Gross, Mansour, and Tucker introduced the partial Petrial polynomial, which enumerates all partial Petrials of a ribbon graph by Euler genus. They provided formulas or recursions for various families of ribbon graphs, including ladder ribbon graphs.
In this paper, we focus on the partial Petrial polynomial of bouquets, which are ribbon graphs with exactly one vertex. We prove that the partial Petrial polynomial of a bouquet primarily depends on its intersection graph, meaning that two bouquets with identical intersection graphs will have the same partial Petrial polynomial. Additionally, we introduce the concept of the partial Petrial polynomial for circle graphs and prove that for a connected graph with $n$ vertices ($n\geq 2$), the polynomial has non-zero coefficients for all terms of degrees from 1 to $n$ if and only if the graph is complete. Finally, we present the partial Petrial polynomials for paths.
\end{abstract}
\begin{keyword}
Petrie dual\sep polynomial\sep genus\sep complete graph\sep path
\vskip0.2cm
\MSC [2020] 05C31\sep 05C10\sep 05C30 \sep 57M15
\end{keyword}
\maketitle

\section{Introduction}
In 1979, Wilson \cite{SEW} introduced the concept of the  Petrial. For an embedded graph
$G$, the Petrial is constructed using the same vertices and edges as $G$, but with the faces replaced by Petrie polygons. These polygons are formed by closed left-right walks in $G$. Petrials are particularly easy to define in the context of ribbon graphs. Specifically, the Petrial of a ribbon graph $G$ is obtained by detaching one end of each edge from its incident vertex disc, giving the edge a half-twist, and then reattaching it to the vertex disc.

It is important to note that the Petrial arises from a local operation applied to each edge of a ribbon graph. When this operation is applied to subsets of the edge set, it results in a partial Petrial.
\begin{definition}[\cite{EM}]
Let $G$ be a ribbon graph and $A\subseteq E(G)$. Then the \emph{partial Petrial}, $G^{\times|A}$, of a ribbon graph $G$ with respect to $A$ is the ribbon graph obtained from $G$ by adding a half-twist to each edges in $A$.
\end{definition}
%Note that $(G^{\times|A})^{\times|A}=G$.
Along with other partial dualities \cite{EM1}, partial Petrials have garnered increasing attention due to their applications in various fields, including topological graph theory, knot theory, matroids, delta-matroids, and physics. Recently, Gross, Mansour, and Tucker introduced the partial Petrial polynomial for arbitrary ribbon graphs.
\begin{definition}[\cite{GMT2}]
The \emph{partial Petrial polynomial} of any ribbon graph $G$ is the generating function
$$^{\partial}\varepsilon^{\times}_{G}(z)=\sum_{A\subseteq E(G)}z^{\varepsilon(G^{\times|A})}$$
that enumerates all partial Petrials of $G$ by Euler genus.
\end{definition}
Formulas or recursions are given for various families of ribbon graphs, including ladder ribbon graphs \cite{GMT2}.  A \emph{bouquet} is a ribbon graph with exactly one vertex.  We say that two loops in a bouquet are \emph{interlaced} if their ends are met in an alternating order when travelling round the vertex boundary. The {\it intersection graph} \cite{CSL} $I(B)$ of  a bouquet $B$ is the graph with vertex set $E(B)$, and in which two vertices $e_1$ and $e_2$ of $I(B)$ are adjacent if and only if $e_1$ and $e_2$ are interlaced in $B$.

In this paper, we prove that the partial Petrial polynomial of a bouquet primarily depends  on its intersection graph, rather than the bouquet itself. Specifically, two bouquets with identical intersection graphs have the same partial Petrial polynomial. We also introduce the concept of the partial Petrial polynomial for circle graphs. Furthermore, we prove that for a connected graph $G$ with $n$ vertices ($n\geq 2$), the partial Petrial polynomial of $G$ has non-zero coefficients for all terms of degrees ranging from 1 to $n$ if and only if
$G$ is a complete graph.  Finally, we present the partial Petrial polynomials for paths.

\section{Partial Petrial polynomial of circle graphs}

\begin{definition}[\cite{BR}]
A {\it ribbon graph} $G$ is a (orientable or non-orientable) surface with boundary,
represented as the union of two sets of topological discs, a set $V(G)$ of vertices, and a set $E(G)$ of edges, subject to the following restrictions.
\begin{enumerate}
\item[(1)] The vertices and edges intersect in disjoint line segments, we call them {\it common line segments} as in \cite{Metrose}.
\item[(2)] Each such common line segment lies on the boundary of precisely one vertex and precisely one edge.
\item[(3)] Every edge contains exactly two such common line segments.
\end{enumerate}
\end{definition}

If $G$ is a ribbon graph, we denote by $f(G)$ the number of boundary components of $G$, and we define $v(G)$, $e(G)$, and $c(G)$ to be the number of vertices, edges, and connected components of $G$, respectively. We let \[\chi(G)=v(G)-e(G)+f(G),\] the usual \emph{Euler characteristic}, where $G$ is connected or not. The notation $\varepsilon(G)$ represents the \emph{Euler genus} of $G$, that is, \[\varepsilon(G)=2c(G)-\chi(G).\]

Let $G$ be a ribbon graph, with $v\in V(G)$ and $e\in E(G)$.  We denote by $d{(v)}$ the degree of the vertex $v$ in $G$, which is the number of half-edges incident to $v$.
By removing the common line segments from the boundary of $v$,
we obtain $d(v)$ disjoint line segments, denoted as $vl_1, vl_2, \ldots, vl_{d(v)}$, referred to as \emph{vertex line segments} \cite{Metrose}.
Similarly, by removing the common line segments from the boundary of $e$, we obtain two disjoint line segments, termed \emph{edge line segments} \cite{Metrose}, denoted as $\{el_1, el_2\}$. See Figure \ref{Fig05} for an illustration.

\begin{figure}
\begin{center}
\includegraphics[width=9cm]{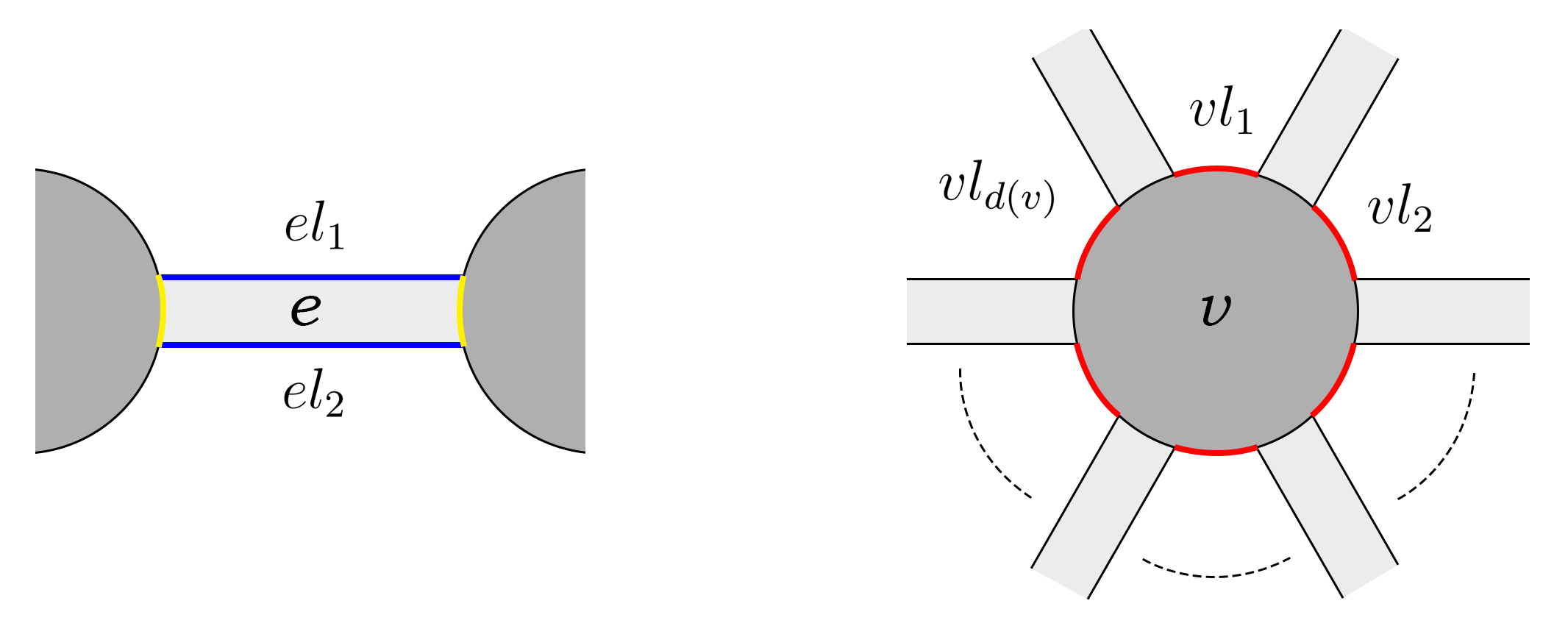}
\caption{Common line segments (yellow), edge line segments
(blue) and vertex line segments (red).}
\label{Fig05}
\end{center}
\end{figure}

Moffatt \cite{Moffatt15} defined the \emph{one-vertex-joint} operation on two disjoint ribbon graphs $P$ and $Q$, denoted by $P\vee Q$, in two steps:
\begin{description}
  \item[(i)] Choose an arc $p$ on the boundary of a vertex-disc $v_1$ of $P$ that lies between two consecutive ribbon ends, and choose another such arc $q$ on the boundary of a vertex-disc $v_2$ of $Q$.
  \item[(ii)] Paste the vertex-discs $v_1$ and $v_2$ together by identifying the arcs $p$ and $q$.
\end{description}
Note that \[f(P\vee Q)=f(P)+f(Q)-1.\]

We say that a ribbon graph $G$ is \emph{prime}, if there do not exist non-empty ribbon subgraphs $G_1, \ldots, G_k$ of $G$ (with $k\geq 2$) such that $G=G_1\vee \ldots\vee G_k$.
Clearly, a bouquet $B$ is prime if and only if its intersection graph $I(B)$ is connected.

A \emph{signed rotation} \cite{GT} of a bouquet $B$ is a cyclic ordering of the half-edges at the vertex, and if the edge is an orientable loop, then we give the same sign $+$  or $-$ to the corresponding two half-edges. Otherwise, we give different signs (one $+$, the other $-$). The sign $+$ is always omitted. Reversing the signs of both half-edges with a given loop yields an equivalent signed rotation. Note that the signed rotation of a bouquet is independent of the choice of the first half-edge. Sometimes we will use the signed rotation to represent the bouquet itself. In terms of signed rotation, $B^{\times|A}$ is formed by toggling one of the signs of every loop in $A$.

The {\it signed intersection graph} $SI(B)$ of a bouquet $B$ consists of $I(B)$ and a $+$ or $-$ sign at each vertex of $I(B)$, where the vertex corresponding to the orientable loop of $B$ is signed $+$ and the vertex corresponding to the non-orientable loop of $B$ is signed $-$.

\begin{proposition}[\cite{QYJ3}]\label{pro-04}
If two bouquets $B_1$ and $B_2$ have the same signed intersection graph, then \[{^\partial\varepsilon^{\times}_{B_1}(z)}={^\partial\varepsilon^{\times}_{B_2}(z)}.\]
\end{proposition}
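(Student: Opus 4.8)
The plan is to encode each bouquet as a symmetric matrix over $\mathbb{F}_2$ and to show that its Euler genus, and hence each summand of the partial Petrial polynomial, depends only on that matrix. For a bouquet $B$ with $E(B)=\{e_1,\dots,e_n\}$, define the symmetric matrix $M_B\in\mathbb{F}_2^{n\times n}$ by $(M_B)_{ij}=1$ (for $i\neq j$) exactly when $e_i$ and $e_j$ are interlaced, and $(M_B)_{ii}=1$ exactly when $e_i$ is non-orientable. By construction the off-diagonal entries of $M_B$ record the edges of $I(B)$ and the diagonal entries record the vertex signs of $SI(B)$, so $M_B$ is precisely a matrix form of the signed intersection graph: two bouquets have the same $SI$ if and only if their matrices agree up to a simultaneous permutation of rows and columns.

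The engine of the proof is the identity $\varepsilon(B)=\operatorname{rank}_{\mathbb{F}_2}(M_B)$. Since a bouquet is connected with a single vertex, the definitions in the excerpt give $\varepsilon(B)=1+e(B)-f(B)$, so this is equivalent to $f(B)=1+\dim\ker M_B$. I expect this to be the main obstacle. I would prove it by induction on $n$, attaching loops one at a time and matching the change in the number of boundary components against the change in $\dim\ker M_B$ caused by bordering the symmetric $\mathbb{F}_2$ matrix with a new row and column, where the new loop's orientability fixes the new diagonal entry and its interlacements fix the new off-diagonal row. This formula is classical in the theory of interlacement matrices and binary delta-matroids, so it may instead be invoked directly.

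Next I would pass the partial Petrial into matrix language. The excerpt records that, in terms of signed rotations, $B^{\times|A}$ toggles one sign on each loop of $A$, which switches exactly those loops between orientable and non-orientable while leaving all interlacements unchanged. Hence $M_{B^{\times|A}}=M_B+D_A$, where $D_A$ is the diagonal $0/1$ matrix supported on $A$. Combining this with the rank identity yields
\[
{}^{\partial}\varepsilon^{\times}_{B}(z)=\sum_{A\subseteq E(B)}z^{\operatorname{rank}_{\mathbb{F}_2}(M_B+D_A)},
\]
an expression that depends on $B$ only through $M_B$, equivalently through $SI(B)$.

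Finally I would conclude. If $B_1$ and $B_2$ share a signed intersection graph, choose a sign-preserving isomorphism $\phi\colon E(B_1)\to E(B_2)$ and let $P$ be the corresponding permutation matrix, so that $M_{B_2}=P\,M_{B_1}\,P^{\top}$ and $P\,D_A\,P^{\top}=D_{\phi(A)}$. Because conjugation by $P$ preserves $\mathbb{F}_2$-rank, $\operatorname{rank}(M_{B_1}+D_A)=\operatorname{rank}(M_{B_2}+D_{\phi(A)})$, and therefore $\varepsilon(B_1^{\times|A})=\varepsilon(B_2^{\times|\phi(A)})$ for every $A\subseteq E(B_1)$. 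Since $A\mapsto\phi(A)$ is a bijection between the subsets of $E(B_1)$ and those of $E(B_2)$, reindexing the defining sum gives ${}^{\partial}\varepsilon^{\times}_{B_1}(z)={}^{\partial}\varepsilon^{\times}_{B_2}(z)$, as required.
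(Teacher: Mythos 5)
You should first note that the paper contains no proof of this proposition at all: it is imported verbatim from the cited reference \cite{QYJ3} and used in this paper as a black box (it is the main ingredient of Theorem \ref{the-01}). So the comparison here is between your argument and the literature, not anything in the text. That said, your route is correct in its architecture and is essentially the interlacement-matrix machinery that underlies the cited work: encoding $B$ by the symmetric matrix $M_B$ over $\mathbb{F}_2$ (interlacements off-diagonal, twists on the diagonal) is exactly a matrix form of $SI(B)$; half-twisting a loop changes no interlacement, so $M_{B^{\times|A}}=M_B+D_A$ is right; a common signed intersection graph gives $M_{B_2}=PM_{B_1}P^{\top}$ for a permutation matrix $P$, conjugation preserves $\mathbb{F}_2$-rank, and reindexing the sum over $A\mapsto\phi(A)$ finishes the proof. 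As a sanity check, your formula reproduces the paper's own computations: for $[1,2,\ldots,n,1,2,\ldots,n]$ one gets $M=J-I$, whose rank behaviour recovers Lemmas \ref{lem-02} and \ref{lem-01}.

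The one genuine weak point is your ``engine'' $\varepsilon(B)=\operatorname{rank}_{\mathbb{F}_2}(M_B)$. The identity itself is true and classical (it is the non-orientable extension of the Cohn--Lempel theorem, equivalently Zulli's trip-matrix computation, and it appears in the delta-matroid/ribbon-graph literature), so invoking it with a citation makes your proof complete. But the induction you sketch for it --- border $M_B$ by one row and column per new loop and match the change in $f$ against the change in nullity --- does not work as stated: the induction hypothesis only tells you \emph{how many} boundary components $B-e$ has, not which attachment arcs of the new loop lie on which component or with what relative orientation, and that is precisely the information that decides whether $f$ increases, decreases, or stays fixed when the loop is reattached. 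A correct self-contained proof needs a stronger inductive invariant or pivot/local-complementation reductions (in the spirit of the Operations 1--4 used elsewhere in this paper). In short: cite the rank identity and your argument stands; rely on the sketched induction and there is a gap.
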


\begin{lemma}\label{lem-06}
Let $G$ be a ribbon graph and $A\subseteq E(G)$. Then \[{^\partial\varepsilon^{\times}_{G}(z)}={^\partial\varepsilon^{\times}_{G^{\times|A}}(z)}.\]
\end{lemma}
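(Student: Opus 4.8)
The plan is to recognize that the family of partial Petrial operations is closed under composition and behaves like a parity (mod $2$) action of the power set of $E(G)$ on the ribbon graph, and then to exploit this to re-index the defining sum by a measure-preserving bijection. Concretely, I would first prove a composition rule for partial Petrials and then change variables in the summation.

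First I would establish the composition rule
\[
(G^{\times|A})^{\times|A'}=G^{\times|(A\triangle A')}\qquad\text{for all }A,A'\subseteq E(G),
\]
where $\triangle$ denotes symmetric difference. The key observation is that the partial Petrial is a purely edge-local operation: $G^{\times|A}$ adds a half-twist to each edge of $A$, so performing $\times|A$ and then $\times|A'$ adds a half-twist to each edge of $A\cup A'$, with the edges of $A\cap A'$ receiving two half-twists. Since adding two half-twists to a single ribbon edge restores it up to the homeomorphism defining equality of ribbon graphs, only the edges of $A\triangle A'$ retain a net added half-twist. I expect this edge-local parity statement to be the main obstacle in a fully rigorous treatment. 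In the signed-rotation language of Section~2 it is essentially transparent, since $\times|A$ toggles one sign of every loop in $A$ and toggling a sign twice is the identity; for a general ribbon graph one must instead track each edge as a disc and verify directly that two consecutive half-twists cancel.

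Granting the composition rule, the conclusion follows by a bijective re-indexing. Fixing $A\subseteq E(G)$, I would write
\[
{^\partial\varepsilon^{\times}_{G^{\times|A}}(z)}=\sum_{A'\subseteq E(G)}z^{\varepsilon((G^{\times|A})^{\times|A'})}=\sum_{A'\subseteq E(G)}z^{\varepsilon(G^{\times|(A\triangle A')})}.
\]
The map $A'\mapsto A\triangle A'$ is an involution, hence a bijection, on the power set of $E(G)$, so substituting $B=A\triangle A'$ merely reorders the terms of the sum. This gives
\[
\sum_{A'\subseteq E(G)}z^{\varepsilon(G^{\times|(A\triangle A')})}=\sum_{B\subseteq E(G)}z^{\varepsilon(G^{\times|B})}={^\partial\varepsilon^{\times}_{G}(z)},
\]
which is the desired identity. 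Thus the entire argument reduces to the composition rule, and once that local cancellation of half-twists is in hand, the equality of the two polynomials is immediate.
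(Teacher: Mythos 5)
Your proof is correct and takes essentially the same approach as the paper: the paper's one-line argument --- that the sets of all partial Petrials of $G$ and of $G^{\times|A}$ coincide --- is precisely the content of your composition rule $(G^{\times|A})^{\times|A'}=G^{\times|(A\triangle A')}$ together with the re-indexing bijection $A'\mapsto A\triangle A'$. You have simply spelled out the details (the cancellation of two half-twists and the resulting involution on subsets) that the paper leaves implicit.
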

\begin{proof}
This is because the sets of all partial Petrials of $G$ and $G^{\times|A}$ are the same.
\end{proof}

\begin{theorem}\label{the-01}
If two bouquets $B_1$ and $B_2$ have the same intersection graph, then \[{^\partial\varepsilon^{\times}_{B_1}(z)}={^\partial\varepsilon^{\times}_{B_2}(z)}.\]
\end{theorem}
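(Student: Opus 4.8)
The plan is to leverage the two results already established: Proposition \ref{pro-04}, which reduces the equality of partial Petrial polynomials to the coincidence of \emph{signed} intersection graphs, and Lemma \ref{lem-06}, which says that passing to a partial Petrial leaves the partial Petrial polynomial unchanged. The bridge between the unsigned hypothesis and the signed tool is the observation that taking a partial Petrial fixes the intersection graph while freely toggling the signs of the signed intersection graph.

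First I would make precise how partial Petriality acts on $SI(B)$. Working with the signed rotation of $B$, the operation $B\mapsto B^{\times|A}$ toggles exactly one sign of each loop in $A$ while leaving the cyclic ordering of the half-edges untouched. Since two loops are interlaced precisely when their four ends alternate in this cyclic ordering — a condition that does not refer to the signs at all — the underlying intersection graph is unaffected, so $I(B^{\times|A})=I(B)$. At the same time, toggling one sign of a loop converts an orientable loop into a non-orientable one and vice versa, so the corresponding vertex label in the signed intersection graph flips from $+$ to $-$ or from $-$ to $+$, for exactly the vertices in $A$. In short, $SI(B^{\times|A})$ has the same underlying graph as $SI(B)$, with the signs on the vertices in $A$ reversed.

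With this in hand the theorem is immediate. Suppose $B_1$ and $B_2$ satisfy $I(B_1)=I(B_2)=:I$. Their signed intersection graphs $SI(B_1)$ and $SI(B_2)$ share the underlying graph $I$ but may carry different signs. Let $A\subseteq E(B_1)$ be the set of vertices of $I$ on which the two sign assignments disagree. By the previous paragraph, $SI(B_1^{\times|A})$ has underlying graph $I$ with the signs of $B_1$ reversed exactly on $A$, which is precisely the sign assignment of $SI(B_2)$; hence $SI(B_1^{\times|A})=SI(B_2)$. Proposition \ref{pro-04} then yields ${^\partial\varepsilon^{\times}_{B_1^{\times|A}}(z)}={^\partial\varepsilon^{\times}_{B_2}(z)}$, while Lemma \ref{lem-06} gives ${^\partial\varepsilon^{\times}_{B_1}(z)}={^\partial\varepsilon^{\times}_{B_1^{\times|A}}(z)}$, and chaining the two equalities completes the proof.

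I expect the only genuine content to reside in the first step — verifying that partial Petriality preserves interlacement while independently flipping each loop's orientability. This is essentially a bookkeeping check on the signed rotation rather than a real obstacle, so the theorem should emerge as a short deduction from Proposition \ref{pro-04} and Lemma \ref{lem-06} once the action of $B\mapsto B^{\times|A}$ on $SI(B)$ is understood.
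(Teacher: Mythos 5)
Your proof is correct and follows essentially the same route as the paper's: choose $A$ to be the set of edges where the signs of $SI(B_1)$ and $SI(B_2)$ disagree, observe $SI(B_1^{\times|A})=SI(B_2)$, and conclude via Proposition \ref{pro-04} and Lemma \ref{lem-06}. Your extra paragraph verifying that partial Petriality preserves interlacement while toggling signs just makes explicit a fact the paper states when defining signed rotations.
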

\begin{proof}
Since $B_1$ and $B_2$ have the same intersection graph, there exists a subset $A$ of vertices in $SI(B_1)$ such that each vertex in $A$ corresponds to a vertex in $SI(B_2)$ with a different sign. We also denote its corresponding edge subset of $B_1$ by $A$.
Thus, $SI(B_1^{\times|A})=SI(B_2)$. By Proposition \ref{pro-04}, we have \[{^\partial\varepsilon^{\times}_{B_1^{\times|A}}(z)}={^\partial\varepsilon^{\times}_{B_2}(z)}.\]
Furthermore, by Lemma \ref{lem-06}, \[{^\partial\varepsilon^{\times}_{B_1}(z)}={^\partial\varepsilon^{\times}_{B_1^{\times|A}}(z)}
={^\partial\varepsilon^{\times}_{B_2}(z)}.\]
\end{proof}

A graph is a \emph{circle graph} if it is the intersection graph of a bouquet.

\begin{definition}\label{def-01}
The \emph{partial Petrial polynomial}, denoted by $P_{G}^{\times}(z)$, of a circle graph $G$ is defined as $P_{G}^{\times}(z):={^\partial\varepsilon^{\times}_{B}(z)}$, where $B$ is a bouquet such that $G=I(B)$.
\end{definition}

The well-definedness of Definition \ref{def-01} is ensured by Theorem \ref{the-01}.

\section{Partial Petrial polynomials for complete graphs}

%\begin{proposition}[\cite{GMT2}]
%Let $G$, $G_1$, and $G_2$ be ribbon graphs, and suppose that $G = G_1 \vee G_2$.
%Then \[^\partial\varepsilon^{\times}_{G}(z)=(^\partial\varepsilon^{\times}_{G_1}(z))(^\partial\varepsilon^{\times}_{G_2}(z)).\]
%\end{proposition}

Suppose $P=p_{1}p_{2}\cdots p_{k}$ is a \emph{string}. Then we call
$P^{-1}=(-p_{k})\cdots (-p_{2})(-p_{1})$  the \emph{inverse} of $P$.
Suppose $P$ and $Q$ are strings. In \cite{QYJ}, Yan and Jin introduced four operations on signed rotations as follows:

\begin{description}
  \item[Operation 1] Change the signed rotation from $PaQba$ to $PabQa$.
  \item[Operation 2] Change the signed rotation from $P(-a)Qba$ to $P(-b)(-a)Qa$.
  \item[Operation 3] Simplify the signed rotation from $Pabab$ to $P$.
  \item[Operation 4] Simplify the signed rotation from $P(-a)a$ to $P$.
\end{description}

\begin{proposition}[\cite{QYJ}]\label{pro03}
The operations 1, 2, 3 and 4 do not alter the number of boundary components.
\end{proposition}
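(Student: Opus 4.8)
The plan is to treat the four operations in two groups. Operations 3 and 4 delete letters and can be dispatched immediately using the one-vertex-join formula \(f(P\vee Q)=f(P)+f(Q)-1\) recorded above, while Operations 1 and 2 merely rearrange letters and will be handled by tracing the boundary walk through the small window of the signed rotation that they alter. The guiding intuition is that Operations 3 and 4 strip off a ``free'' handle or crosscap that contributes nothing to the boundary, whereas Operations 1 and 2 are ribbon-end slides, which are homeomorphisms of the underlying surface and hence preserve every boundary component.

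For Operation 3, observe that in the cyclic word \(Pabab\) both occurrences of \(a\) and both occurrences of \(b\) lie inside the block \(abab\), so \(a\) and \(b\) do not appear in \(P\); consequently \(P\) is itself a legitimate signed rotation and \(Pabab=P\vee(abab)\). A direct trace of the two-edge bouquet \(abab\) shows that it has a single boundary component, i.e.\ \(f(abab)=1\), so \(f(Pabab)=f(P)+f(abab)-1=f(P)\). Operation 4 is identical in spirit: \(P(-a)a=P\vee\bigl((-a)a\bigr)\), the loop \((-a)a\) is a single non-orientable loop (a M\"obius band) with \(f\bigl((-a)a\bigr)=1\), and again \(f(P(-a)a)=f(P)+1-1=f(P)\). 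Thus Operations 3 and 4 preserve the number of boundary components.

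For Operations 1 and 2 I would fix the boundary-tracing model explicitly: place the \(2n\) half-edge feet on the vertex disc in the cyclic order given by the signed rotation, attach an untwisted band for each positively signed loop and a half-twisted band for each negatively signed loop, and trace the boundary as the closed walk that alternates between arcs of the vertex boundary and sides of the bands, where the twist of a band dictates whether the walk stays on the same side or crosses to the other side. Encoding this as a permutation on the set of corners, the boundary components are exactly its cycles. Since each operation changes the cyclic word only within a bounded window---the two feet of \(a\) and the single moved foot of \(b\), together with the interface points of the blocks \(P\) and \(Q\)---the induced permutation changes only locally. I would then verify, by following every boundary arc that enters or leaves this window, that the local reconnection pattern before the move matches the one after it, so the total number of cycles is unchanged. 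For Operation 2 the sign reversal of \(b\) is exactly what compensates for sliding the foot across the half-twist of the non-orientable band \(a\), and the trace must be carried out with this twist in place.

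The main obstacle is the case analysis for Operations 1 and 2. Because the partner of the moved letter \(b\) may lie in either \(P\) or \(Q\), and because the sign of \(a\) distinguishes the two operations, there are several configurations of how the boundary walk threads the window and the block \(Q\); one must check that the number of boundary components is preserved in each. Treating \(Q\) and \(P\) as black boxes, specified only by how the boundary enters and exits them, should keep this analysis finite and uniform, but getting the twist bookkeeping right---especially the induced sign flip in Operation 2---is the delicate point.
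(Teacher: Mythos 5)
You should first be aware that the paper contains no proof of this statement: Proposition~\ref{pro03} is imported verbatim from \cite{QYJ}, so your attempt can only be measured against what a complete proof requires, not against an internal argument. Your treatment of Operations 3 and 4 meets that standard: since both occurrences of $a$ (and of $b$) lie in the deleted block, the rotation factors as a one-vertex join, $Pabab=P\vee[a,b,a,b]$ and $P(-a)a=P\vee[(-a),a]$; the two-loop interlaced orientable bouquet is a once-punctured torus with $f=1$, the single non-orientable loop is a M\"obius band with $f=1$, and the formula $f(P\vee Q)=f(P)+f(Q)-1$ then gives $f(Pabab)=f(P)$ and $f(P(-a)a)=f(P)$. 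That half is complete and correct.

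The gap is in Operations 1 and 2: everything after ``I would fix the boundary-tracing model'' is a plan rather than a proof, and that plan is precisely the substantive half of the proposition. The good news is that your strategy does go through, and with less case analysis than you fear. The window has exactly six interface points: the two ends of the block $P$, the two ends of the block $Q$, and the two sides $s_1,s_2$ of the ribbon $b$ (its far foot, wherever it lies, is part of the black box, so no case split on whether $b$'s partner is in $P$ or $Q$ is needed). Tracing the boundary through the window shows that for Operation 1 the through-window matching is: end of $P$ to start of $P$ (across the untwisted ribbon $a$), start of $Q$ to $s_2$, end of $Q$ to $s_1$ --- and this matching is \emph{identical} before and after the move. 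For Operation 2 the matching is: end of $P$ to $s_2$, start of $P$ to start of $Q$ (across the twisted ribbon $a$), end of $Q$ to $s_1$, again identical before and after, but only because the sign of $b$ is reversed: flipping $b$'s sign swaps which of $s_1,s_2$ attaches to which corner of $b$'s moved foot, exactly compensating for the foot's passage across the half-twist. One also checks that every corner inside the window lies on one of these three through-paths, so no boundary component is contained wholly in the window before or after; hence the boundary components are in bijection and $f$ is unchanged. Until this trace (or the equivalent homeomorphism argument for ribbon-end slides, made precise) is actually written out, your submission proves only Operations 3 and 4.
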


\begin{proposition}\label{pro02}
Let $G$ be a connected ribbon graph. Then the highest degree of $^\partial\varepsilon^{\times}_{G}(z)$ is $e(G)-v(G)+1$.
\end{proposition}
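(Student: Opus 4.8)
The plan is to read the degree directly off the definition $ {}^{\partial}\varepsilon^{\times}_{G}(z)=\sum_{A\subseteq E(G)}z^{\varepsilon(G^{\times|A})}$. Since the partial Petrial operation only inserts half-twists, it changes neither the vertex set, the edge set, nor the number of connected components, so $v(G^{\times|A})=v(G)$, $e(G^{\times|A})=e(G)$, and $c(G^{\times|A})=c(G)=1$ for every $A\subseteq E(G)$. Using the Euler genus formula, the exponents are
\[
\varepsilon(G^{\times|A})=2c(G)-\chi(G^{\times|A})=2-v(G)+e(G)-f(G^{\times|A}),
\]
so they are governed entirely by the boundary-component count. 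Hence the highest degree equals $2-v(G)+e(G)-\min_{A}f(G^{\times|A})$, and the whole problem reduces to proving $\min_{A}f(G^{\times|A})=1$.

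The upper bound is immediate: any ribbon graph with at least one vertex has $f\ge 1$, so $\varepsilon(G^{\times|A})\le e(G)-v(G)+1$ for all $A$, giving $\deg {}^{\partial}\varepsilon^{\times}_{G}(z)\le e(G)-v(G)+1$. It therefore remains to exhibit a single subset $A$ with $f(G^{\times|A})=1$; this simultaneously shows that the coefficient of $z^{e(G)-v(G)+1}$ is nonzero and pins the degree down to the claimed value.

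For the achievability I would build the twisting greedily along a spanning tree. Fix a spanning tree $T$ of $G$ and list the non-tree edges as $e_{1},\dots,e_{m}$ with $m=e(G)-v(G)+1$. A ribbon graph whose underlying graph is a tree is contractible, hence homeomorphic to a disc, and so has a single boundary component regardless of how its edges are twisted (a half-twist can produce non-orientability only around a cycle, of which a tree has none); this starts the induction with one boundary component. I would then reattach $e_{1},\dots,e_{m}$ one at a time, keeping the invariant that the current ribbon graph $H_{i}$ satisfies $f(H_{i})=1$. When $e_{i}$ is reattached, both of its ends meet the unique boundary component of $H_{i-1}$, and the claim is that its twist can be chosen so that $f(H_{i})=1$ again. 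Since each $e_i$ is reattached at the same common line segments as in $G$, the resulting ribbon graph differs from $G$ only in the twists of the edges, i.e. it is $G^{\times|A}$ for the appropriate $A$.

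The heart of the argument, and the step I expect to be the main obstacle, is this local claim. I would track the four corners of the band $e_{i}$ together with the matching that the boundary of $H_{i-1}$ induces on them. The two attaching segments of $e_i$ cut the single boundary circle of $H_{i-1}$ into two complementary arcs, and each such arc joins a corner at one end of $e_{i}$ to a corner at the other end; thus the induced matching is one of the two ``cross-end'' matchings and never pairs two corners at the same end. The two possible twists of $e_{i}$ realize exactly the two cross-end matchings of the band, so one of them coincides with the induced matching (yielding two boundary walks through $e_{i}$) while the other differs from it (yielding a single $4$-cycle, i.e. one boundary walk). Choosing the latter twist keeps $f(H_{i})=1$, completing the induction and producing the required $A$ with $f(G^{\times|A})=1$. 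The care needed in this step is precisely the book-keeping of the corner matching and verifying that the induced matching is always cross-end; once that is established, the alternative between one and two boundary components is forced, and the proposition follows.
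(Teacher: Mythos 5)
Your proof is correct, and its achievability half takes a genuinely different route from the paper's. The upper-bound half is identical in both arguments (Euler's formula together with $f(G^{\times|A})\geq 1$). For attaining the bound, the paper works top-down on $G$ itself: if $f(G)>1$, it asserts that some edge has its two edge line segments on distinct boundary components, twists that edge to merge them (so $f$ drops by exactly one), and iterates until $f=1$; the subsets produced along the way are nested and realize every Euler genus from $\varepsilon(G)$ up to $e(G)-v(G)+1$. You instead build bottom-up from a spanning tree: the tree thickening is a disc no matter how its edges are twisted, and you re-attach the $e(G)-v(G)+1$ remaining edges one at a time, showing by the corner-matching analysis that one of the two twist choices always preserves a single boundary component. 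The two key local facts are complementary --- the paper uses ``a suitable twist merges two boundary components,'' while you use ``a suitable twist avoids splitting the unique one.'' Your version makes the role of connectivity completely explicit (existence of a spanning tree, and the fact that both attaching arcs of each new band lie on the one boundary circle), and your local step is fully justified; by contrast, the paper's existence claim for a merging edge when $f(G)>1$ is stated without proof and itself needs a small connectivity argument (if every edge had both edge line segments on a single boundary component, then walking around each vertex disc shows all line segments incident to a vertex lie on one component, and connectivity would force $f(G)=1$). What the paper's route buys is brevity and the monotone chain of nested subsets, which fits naturally with the interpolating property quoted as Proposition \ref{pro01}.
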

\begin{proof}
For any subset $A\subseteq E(G)$, by Euler's formula, we have \[\varepsilon(G^{\times|A})=e(G^{\times|A})+2-v(G^{\times|A})-f(G^{\times|A}).\]
Since partial Petrial does not alter the underlying graph, we have $v(G)=v(G^{\times|A})$ and $e(G)=e(G^{\times|A})$. Since $f(G^{\times|A})\geq1$, we can derive \[\varepsilon(G^{\times|A})=e(G)+2-v(G)-f(G^{\times|A})\leq e(G)-v(G)+1.\]
Hence, the degree of $^\partial\varepsilon^{\times}_{G}(z)$ is at most $e(G)-v(G)+1$.

Next, we prove that the maximum degree of $^\partial\varepsilon^{\times}_{G}(z)$ can indeed reach this upper bound. It suffices to show that there exists an $A\subseteq E(G)$ such that $f(G^{\times|A})=1$. If $f(G)=1$, then $A$ can be taken as the empty set. Otherwise, if $f(G)>1$, then there exists an edge ribbon $e\in E(G)$ such that the two edge line segments of $e$ are contained in different boundary components $\alpha_1$ and $\alpha_2$ of $G$. Therefore, these two boundary components will merge into a new boundary component in $G^{\times|e}$, while the other boundary components of $G$ and $G^{\times|e}$ remain in one-to-one correspondence. Hence, $f(G^{\times|e})=f(G)-1$. By repeatedly performing similar operations, we can reduce the number of boundary components until it becomes 1. The collection of edge ribbons formed by each step constitutes the set $A$.

\end{proof}

\begin{proposition}[\cite{GMT2}]\label{pro01}
For any ribbon graph $G$, the partial Petrial polynomial $^\partial\varepsilon^{\times}_{G}(z)$ is interpolating.
\end{proposition}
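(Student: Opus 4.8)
The plan is to show that the support of ${^\partial\varepsilon^{\times}_{G}(z)}$ is an interval of consecutive integers, which is exactly what it means for the polynomial to be interpolating. Everything rests on one local estimate: toggling the half-twist on a single edge changes the Euler genus by at most $1$. Granting that, I would close the argument with a discrete intermediate value principle over the Boolean lattice of edge subsets.

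First I would record the local estimate. Fix $A \subseteq E(G)$ and $e \in E(G)$; since $(G^{\times|A})^{\times|e} = G^{\times|(A\triangle e)}$, it suffices to compare a ribbon graph $H$ with $H^{\times|e}$ for a single edge $e$. Toggling the twist of $e$ neither removes the edge nor disconnects the graph, so the vertex, edge, and component counts are unchanged; because $\varepsilon(H)=e(H)+2c(H)-v(H)-f(H)$, the difference $\varepsilon(H^{\times|e})-\varepsilon(H)=-\bigl(f(H^{\times|e})-f(H)\bigr)$ is governed solely by the boundary count. To bound $f$, view both $H$ and $H^{\times|e}$ as $H\setminus e$ with the band $e$ re-attached along two arcs $\gamma_1,\gamma_2$ of $\partial(H\setminus e)$, the two framings differing by a half-twist. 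If $\gamma_1$ and $\gamma_2$ lie on different boundary components of $H\setminus e$, then re-attaching the band merges those two components into one irrespective of the framing, so $f(H)=f(H^{\times|e})$; if they lie on the same component, the two framings respectively split it and preserve it, changing $f$ by exactly $1$. In either case $|f(H^{\times|e})-f(H)|\le 1$, hence $|\varepsilon(H^{\times|e})-\varepsilon(H)|\le 1$.

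With the local estimate in hand, I would set $g(A):=\varepsilon(G^{\times|A})$ and regard $g$ as a function on the Boolean lattice whose vertices are the subsets $A\subseteq E(G)$, two subsets being adjacent when they differ in a single element. This lattice is connected, and the estimate gives $|g(A)-g(A')|\le 1$ whenever $A$ and $A'$ are adjacent. Therefore the image of $g$ is a set of consecutive integers: if $g(A_0)=m$ and $g(A_1)=M$ with $m\le M$, any path in the lattice from $A_0$ to $A_1$ (obtained by toggling one edge of $G$ at a time) yields a sequence of genus values that begins at $m$, ends at $M$, and changes by at most $1$ at each step, so by the discrete intermediate value property every integer between $m$ and $M$ is realised as $g(A)$ for some $A$.

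Finally, the coefficient of $z^{k}$ in ${^\partial\varepsilon^{\times}_{G}(z)}$ is the number of subsets $A$ with $g(A)=k$; hence its support is exactly the integer interval $[\min_A g(A),\,\max_A g(A)]$, with no internal gaps, which is precisely the interpolating property. The crux I expect is the local estimate, and within it the isolation of the two band-reattachment cases: the different-components case (which contains bridges) is where the genus can remain fixed, and the observation that this produces a change of $0$ rather than a jump of $2$ is exactly what allows the intermediate value argument to close.
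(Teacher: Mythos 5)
The paper never proves this proposition---it is imported verbatim from \cite{GMT2}---so there is no in-paper argument to compare against; what you have done is reconstruct the missing proof. Your reconstruction is correct. The local estimate is exactly right: writing $\varepsilon(H)=2c(H)-v(H)+e(H)-f(H)$ and noting that toggling one edge fixes $v$, $e$, $c$, the whole question reduces to the boundary count, and your case split is the correct one (attachment arcs on different boundary circles of $H\setminus e$: the band merges them regardless of framing, so $\Delta f=0$; arcs on the same circle: one framing splits it and the other does not, so $|\Delta f|=1$). The discrete intermediate-value argument over the hypercube of edge subsets is then airtight, since adjacent subsets have genus values differing by at most $1$ and the hypercube is connected, forcing the image of $A\mapsto\varepsilon(G^{\times|A})$ to be a full integer interval. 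This is, in substance, the same mechanism by which Gross, Mansour and Tucker establish their interpolating theorem in the cited source, so your proposal matches the standard proof rather than offering an alternative route; its one virtue relative to the paper as written is that it makes the result self-contained.
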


\begin{lemma}\label{lem03}
Let $B$ be a prime bouquet with $e(B) = n$, where $n \geq 2$. Then $f(B)\leq n$.
Moreover, $f(B)=n$ if and only if \[B=[e_1, e_2, \ldots, e_n, -e_1, -e_2, \ldots, -e_n].\]
\end{lemma}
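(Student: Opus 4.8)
The plan is to convert the statement about boundary components into a statement about Euler genus, and then read everything off a single linear-algebraic invariant of the signed intersection graph. Since $B$ is a bouquet we have $v(B)=c(B)=1$, so Euler's formula gives $\varepsilon(B)=2-\chi(B)=2-(1-n+f(B))=1+n-f(B)$, that is $f(B)=1+n-\varepsilon(B)$. Hence the inequality $f(B)\le n$ is equivalent to $\varepsilon(B)\ge 1$, and the equality $f(B)=n$ is equivalent to $\varepsilon(B)=1$. The whole lemma therefore reduces to computing, and then minimising, the Euler genus of a prime bouquet.

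The engine I would use is the identity $\varepsilon(B)=\operatorname{rank}_{\mathrm{GF}(2)}A$, where $A=A(SI(B))$ is the symmetric $n\times n$ matrix over $\mathrm{GF}(2)$ with $A_{ii}=1$ exactly when $e_i$ is a non-orientable loop and $A_{ij}=1$ exactly when $e_i,e_j$ are interlaced. This is the ribbon-graph form of Bouchet's genus/rank correspondence; I would either cite it or derive it from the paper's own operations, noting that removing an orientable interlaced pair $abab$ (Operation 3) drops both $\varepsilon$ and the rank by $2$, removing a crosscap $(-a)a$ (Operation 4) drops both by $1$, and Operations 1--2 change neither, which together with Proposition~\ref{pro03} lets one accumulate $\varepsilon$ as the total rank contribution of the reduction to a planar bouquet.

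Granting this, the inequality is immediate. Since $B$ is prime with $n\ge 2$, its intersection graph $I(B)$ is connected on at least two vertices and hence contains an edge, i.e. some interlaced pair $e_i,e_j$. Then $A_{ij}=1$, so $A\neq 0$, so $\operatorname{rank}_{\mathrm{GF}(2)}A\ge 1$, giving $\varepsilon(B)\ge 1$ and $f(B)\le n$. For the equality case I would use that a symmetric matrix of rank $1$ over $\mathrm{GF}(2)$ is necessarily of the form $vv^{\top}$ for a unique nonzero $v$: if $\operatorname{rank}A=1$ then $A=vv^{\top}$ with $A_{ii}=v_i$, so a loop $e_i$ is orientable exactly when $v_i=0$, in which case the whole $i$-th row $v_i v^{\top}$ vanishes and $e_i$ is isolated in $I(B)$. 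Primeness forbids isolated vertices (as $n\ge 2$), so $v=\mathbf 1$, forcing $A=J$, the all-ones matrix; equivalently every loop is non-orientable and every pair is interlaced, i.e. $I(B)=K_n$ with all signs $-$.

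It remains to turn the invariant $SI(B)=K_n^{-}$ back into the asserted signed rotation and to check the converse. The converse is a one-line computation: for $B=[e_1,\dots,e_n,-e_1,\dots,-e_n]$ one has $A=J$, whose $\mathrm{GF}(2)$-rank is $1$, so $\varepsilon(B)=1$ and $f(B)=n$. For the forward direction I would invoke uniqueness of the chord diagram realising $K_n$: any chord crossing all others must split the remaining $2n-2$ endpoints into two arcs of $n-1$ each, and an induction on $n$ shows this forces the cyclic endpoint pattern $e_1\cdots e_n e_1\cdots e_n$; attaching the forced sign $-$ to every loop yields exactly $[e_1,\dots,e_n,-e_1,\dots,-e_n]$. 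The two steps I expect to be the real work are justifying the genus/rank identity from the operations (in particular that Operations 1--2 preserve the rank) and proving the chord-diagram uniqueness for $K_n$; the rank-$1$ normal form and the primeness argument are then routine.
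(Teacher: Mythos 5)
Your proof is correct, but it takes a genuinely different route from the paper. The paper argues directly on the surface: primality forces every boundary component of $B$ to contain at least two edge line segments (otherwise $B=[e,e,M]$ splits as a one-vertex-join), which gives $f(B)\le n$ immediately since there are only $2n$ edge line segments; the equality case is then settled by a configuration-forcing induction (its Claim 1 and the subsequent sign analysis) that pins down the signed rotation edge by edge. You instead convert the statement via Euler's formula into $\varepsilon(B)\ge 1$, respectively $\varepsilon(B)=1$, and appeal to the trip-matrix/Cohn--Lempel identity $\varepsilon(B)=\operatorname{rank}_{\mathrm{GF}(2)}A$, where $A$ is the interlacement matrix with diagonal entries marking non-orientable loops; connectivity of $I(B)$ gives $A\neq 0$ hence the inequality, and the classification of symmetric rank-one matrices over $\mathrm{GF}(2)$ as $vv^{\top}$, combined with primeness excluding isolated vertices, forces $A=J$, i.e.\ $SI(B)=K_n$ with all signs negative; uniqueness of the chord diagram realizing $K_n$ then recovers the stated rotation. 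Your approach is more modular and makes the structural reason for the answer transparent, but it rests on two ingredients external to the paper: the genus--rank identity, which must be cited (your alternative suggestion of deriving it from Operations 1--4 is not routine, since showing Operations 1 and 2 preserve the $\mathrm{GF}(2)$-rank directly is itself nontrivial and risks circularity with Proposition \ref{pro03}), and the uniqueness of the chord-diagram realization of $K_n$, which is standard and correctly sketched, but note that the paper's forcing argument essentially proves exactly this uniqueness, with the sign data handled simultaneously. In short: both proofs are valid; the paper's is self-contained within its ribbon-graph toolkit, while yours trades that self-containment for a cleaner algebraic invariant and, as a bonus, isolates the intermediate characterization $SI(B)=K_n^{-}$ explicitly.
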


\begin{proof}
The boundary components of $B$ are formed by closed curves consisting of alternating edge line segments and vertex line segments. Suppose there exists an edge $e$ and a vertex line segment $vl_i$ of $B$ such that either $el_1$-$vl_i$ or $el_2$-$vl_i$ forms a boundary component of $B$. Then $B$ would take the form $[e, e, M]$, where $M$ is a non-empty string due to $e(B) \geq 2$. This contradicts the condition that $B$ is prime. Therefore, each boundary component of $B$ must contain at least two edge line segments and two vertex line segments. Since each edge line segment of $B$ appears in exactly one boundary component, and the total number of edge line segments in $B$ is $2n$, it follows that $f(B)\leq n$. Furthermore, $f(B)=n$ if and only if each boundary component is composed of exactly two edge line segments and two vertex line segments. We now analyze this specific case.

Since $B$ is prime and $e(B) \geq 2$, we can initially represent $B$ as $[e_1, e_2, \ldots]$, where additional half-edges and their signs will be determined. Consider a boundary $\alpha$ formed by one edge line segment from $e_1$, one from $e_2$, and two vertex line segments.
By considering possible configurations, we claim that:

\begin{description}
  \item[Claim 1.] $e_1$ and $e_2$ are interlaced.
  \item[Proof of Claim 1.] Suppose $B=[e_1, e_2, P, \pm e_2, R, \pm e_1, Q]$, where $\pm$ indicates indeterminate signs.
\begin{figure}[htbp]
  \centering
  \includegraphics[width=14cm]{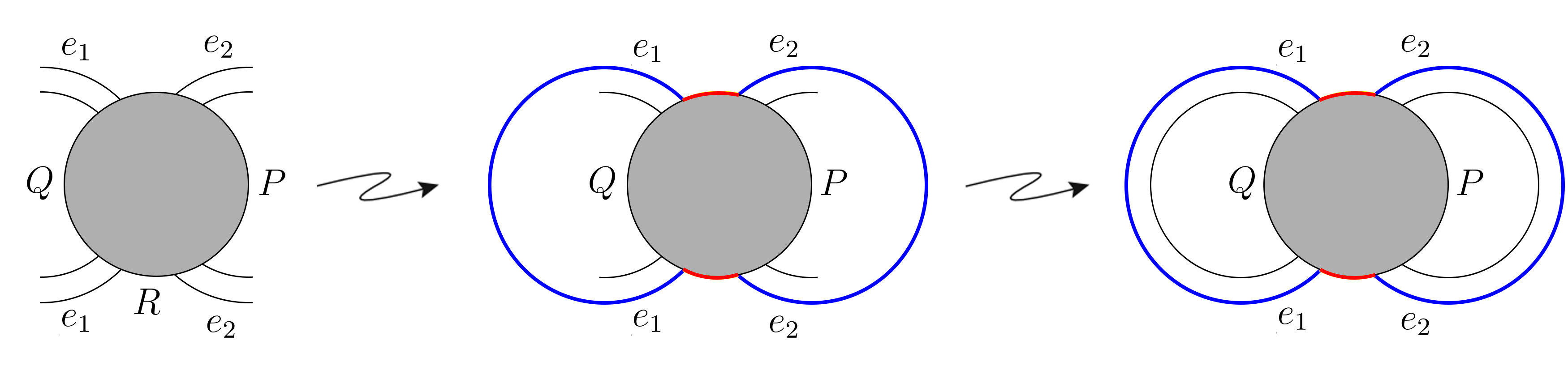}\\
  \caption{$B=[e_1, e_2, P, \pm e_2, R, \pm e_1, Q]$}\label{Fig01}
\end{figure}
According to Figure \ref{Fig01}, $R=[\emptyset]$ and $e_1$, $e_2$ must be orientable; otherwise, the boundary $\alpha$ would contain at least three edge line segments. Thus, $B=[e_1, e_2, P, e_2, e_1, Q]$. If $P=[\emptyset]$, $B$ would not be prime, contradicting the condition. Therefore, $P\neq[\emptyset]$. Assume $B=[e_1, e_2, e_3, P', e_2, e_1, Q]$. Since $e_2$ is orientable, we have that $e_2$ and $e_3$ cannot be interlaced and $B=[e_1, e_2, e_3, P'', e_3, e_2, e_1, Q]$ as shown in Figure \ref{Fig03}; otherwise, there would exist a boundary containing at least three edge line segments.
\begin{figure}[htbp]
  \centering
  \includegraphics[width=14cm]{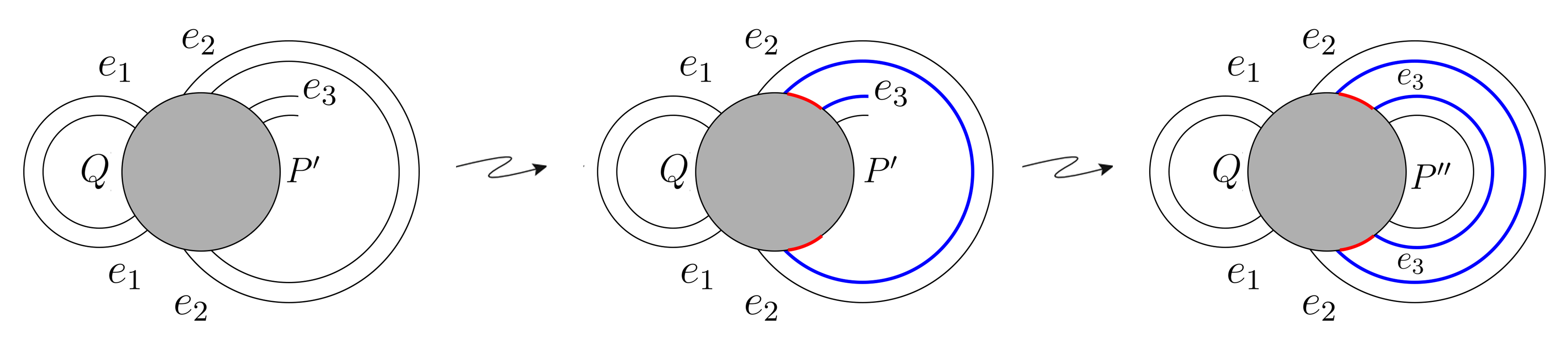}\\
  \caption{$B=[e_1, e_2, e_3, P', e_2, e_1, Q]$}\label{Fig03}
\end{figure}
Continuing this reasoning, we eventually obtain \[B = [e_1, e_2, e_3, \ldots, e_n, \overline{P}, e_n, \ldots, e_3, e_2, e_1, Q]\] with $\overline{P} \neq [\emptyset]$, which contradicts $e(B) = n$. Hence, $e_1$ and $e_2$ must be interlaced.

\end{description}
By Claim 1, assume $B=[e_1, e_2, M, \pm e_1, L, \pm e_2, N]$. As shown in Figure \ref{Fig02}, $L=[\emptyset]$ and $e_1$, $e_2$ are both non-orientable to avoid extra edge segments in boundary $\alpha$.
\begin{figure}[htbp]
  \centering
  \includegraphics[width=14cm]{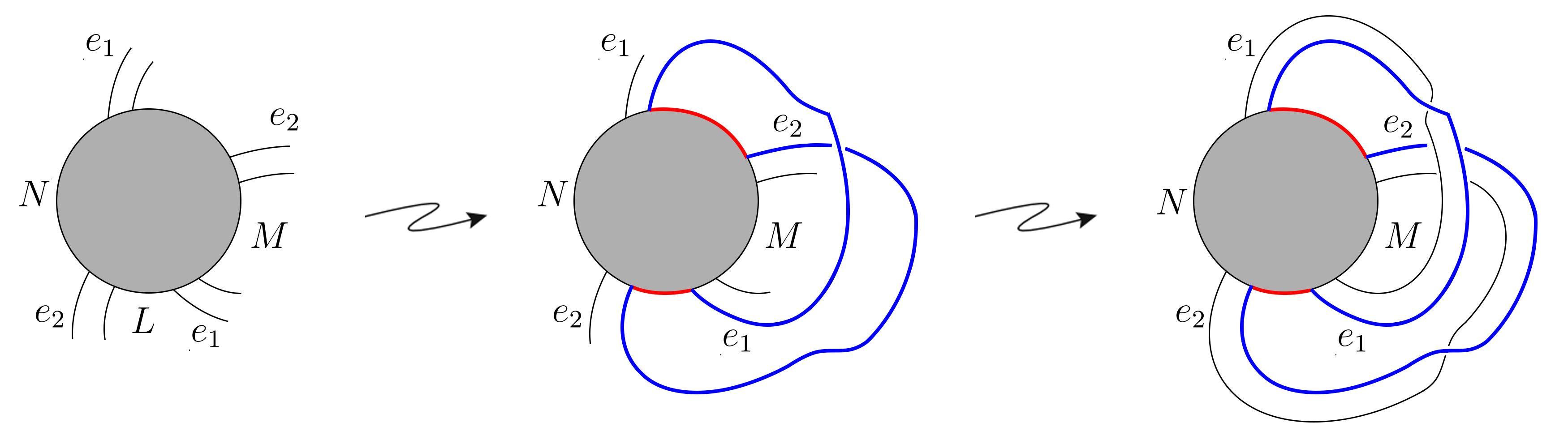}\\
  \caption{$B=[e_1, e_2, M, \pm e_1, L, \pm e_2, N]$}\label{Fig02}
\end{figure}
Thus, we conclude that $B=[e_1, e_2, M, -e_1, -e_2, N]$. If $M = \emptyset$, then primality of $B$ implies $N = \emptyset$. Otherwise, assume $B = [e_1, e_2, e_3, M', -e_1, -e_2, N]$. By Claim 1, $e_2$ and  $e_3$ are interlaced. Following the same reasoning, we derive \[B = [e_1, e_2, e_3, M', -e_1, -e_2, -e_3, N'].\] Continuing this process, if $M' = [\emptyset]$, then $N' = [\emptyset]$. Otherwise, we obtain \[B = [e_1, e_2, e_3, e_4, \ldots, e_n, -e_1, -e_2, -e_3, -e_4, \ldots, -e_n].\]
This completes the proof.

\end{proof}

\begin{theorem}\label{the-03}
Let $G$ be a connected graph with $v(G)=n$, where $n \geq 2$. Then \[P_{G}^{\times}(z) = a_1z+a_2z^2+\cdots +a_nz^n\] where $a_i\neq 0$ for all $1\leq i \leq n$, if and only if $G$ is a complete graph.
\end{theorem}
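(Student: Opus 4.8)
The plan is to transfer everything to a fixed bouquet $B$ with $I(B)=G$, rewrite each exponent of $P_G^\times(z)$ in terms of the boundary-component count, and then reduce the condition ``all coefficients nonzero'' to a single statement about the lowest-degree term that Lemma \ref{lem03} controls exactly. First I would fix a bouquet $B$ with $I(B)=G$; since $G$ is connected, $B$ is prime, with $v(B)=1$ and $e(B)=n$. Because the partial Petrial only toggles signs and never changes the cyclic order of the ends, it preserves interlacement, so $I(B^{\times|A})=I(B)=G$ for every $A\subseteq E(B)$; in particular each $B^{\times|A}$ is again a prime bouquet with $n$ edges. Using $\chi(B^{\times|A})=1-n+f(B^{\times|A})$ together with $\varepsilon=2c-\chi$ and $c=1$, I obtain $\varepsilon(B^{\times|A})=n+1-f(B^{\times|A})$, so every exponent occurring in $P_G^\times(z)$ has the form $n+1-f(B^{\times|A})$.

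Next I would pin down the support. By Lemma \ref{lem03} applied to the prime bouquet $B^{\times|A}$, we have $f(B^{\times|A})\leq n$ for all $A$, hence every exponent is at least $1$ and $P_G^\times(z)$ has no constant term. By Proposition \ref{pro02} the top degree equals $e(B)-v(B)+1=n$ and is attained, so $a_n\neq 0$; thus, for every connected $G$, the polynomial already has the claimed shape $a_1z+\cdots+a_nz^n$ with $a_n\neq 0$, and the only real content is the nonvanishing of the remaining coefficients. By Proposition \ref{pro01} the polynomial is interpolating, so its nonzero coefficients occupy a contiguous block of degrees; combined with $a_n\neq 0$ and the absence of a constant term, the support is $\{k,k+1,\ldots,n\}$ for some $1\leq k\leq n$. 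Consequently $a_i\neq 0$ for all $1\leq i\leq n$ if and only if $a_1\neq 0$, i.e. if and only if some $B^{\times|A}$ satisfies $f(B^{\times|A})=n$.

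This is the crux, and Lemma \ref{lem03} settles it: $f(B^{\times|A})=n$ holds precisely when $B^{\times|A}$ is (up to relabeling and equivalence) $B_0=[e_1,\ldots,e_n,-e_1,\ldots,-e_n]$. A direct inspection of this signed rotation shows every pair $e_i,e_j$ is interlaced, since for $i<j$ their four ends occur in the cyclic pattern $e_i,e_j,-e_i,-e_j$, whence $I(B_0)=K_n$. For the forward direction, if all $a_i\neq 0$ then $a_1\neq 0$, so some $B^{\times|A}$ equals $B_0$, and invariance of the intersection graph under partial Petriality gives $G=I(B)=I(B^{\times|A})=I(B_0)=K_n$. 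For the converse, if $G=K_n$ I would compute $P_G^\times(z)$ using $B_0$ itself, which is legitimate by Theorem \ref{the-01} because $I(B_0)=K_n$: the term $A=\emptyset$ contributes $z^{\varepsilon(B_0)}=z^{1}$, forcing $a_1\neq 0$, and the interpolating property then yields $a_i\neq 0$ for all $i$.

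The main obstacle is essentially absorbed into Lemma \ref{lem03}, which pins down the unique configuration achieving $f=n$; the remaining work is to verify that this configuration's intersection graph is $K_n$ and to exploit the invariance of the intersection graph under partial Petriality to pass freely between $B$ and its Petrials. The interpolating property of Proposition \ref{pro01} is what keeps the argument clean, reducing the nonvanishing of all $n$ coefficients to the single statement $a_1\neq 0$; without it one would have to argue each intermediate coefficient separately.
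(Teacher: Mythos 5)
Your proof is correct and follows essentially the same route as the paper's: Euler's formula converts Euler genus into the boundary-component count, Lemma \ref{lem03} supplies both the bound $f\leq n$ and the unique extremal configuration $[e_1,\ldots,e_n,-e_1,\ldots,-e_n]$, and Propositions \ref{pro02} and \ref{pro01} reduce the nonvanishing of all coefficients to $a_1\neq 0$. Your write-up is in fact slightly more careful than the paper's at two implicit points --- you explicitly note that $B^{\times|A}$ stays prime (which is needed to invoke Lemma \ref{lem03}) and you verify $I(B_0)=K_n$ by inspecting the interlacement pattern --- but these are refinements of the same argument rather than a different approach.
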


\begin{proof}
Let $B$ be a bouquet such that $G=I(B)$. We have $P_{G}^{\times}(z)={^\partial\varepsilon^{\times}_{B}(z)}$.
Since $G$ is connected, it follows that $B$ is prime.

For any $A\subseteq E(B)$, by Euler's formula, we have \[\varepsilon(B^{\times|A})=e(B^{\times|A})+2-v(B^{\times|A})-f(B^{\times|A}).\]
By Lemma \ref{lem03}, $f(B^{\times|A})\leq n$. Since $e(B^{\times|A})=e(B)=n$ and $v(B^{\times|A})=v(B)=1$, we have \[\varepsilon(B^{\times|A})=n+2-1-f(B^{\times|A})\geq 1.\]
Hence, $^\partial\varepsilon^{\times}_{B}(z)$ has degree at least $1$.

By Propositions \ref{pro02} and \ref{pro01}, \[P_{G}^{\times}(z)={^\partial\varepsilon^{\times}_{B}(z)} = a_1z+a_2z^2+\cdots +a_nz^n\] where $a_i\neq 0$ for all $1\leq i \leq n$ if and only if the minimum degree of $^\partial\varepsilon^{\times}_{B}(z)$ is 1 if and only if there exists $A'\subseteq E(B)$ such that $f(B^{\times|A'})=n$ if and only if \[B^{\times|A'}=[e_1, e_2, \ldots, e_n, -e_1, -e_2, \ldots, -e_n]\] by Lemma \ref{lem03} if and only if $I(B)$ (or $G$) is a complete graph.
\end{proof}

\begin{lemma}\label{lem-04}
Let $B_{n}$ be a bouquet with $e(B_n)=n$. If each loop in $B_n$ is trivial  and the number of orientable loops in $B_n$ is $m$, then $f(B_n)=m+1$.
\end{lemma}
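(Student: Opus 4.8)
The plan is to induct on $n$ and, at each step, peel off a single \emph{innermost} trivial loop, recognising it as a one-vertex-join summand so that the boundary-component count can be tracked through the one-vertex-join formula $f(P\vee Q)=f(P)+f(Q)-1$ recorded above.

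First I would record the two single-loop values. A single orientable trivial loop has signed rotation $[e,e]$, whose underlying surface is an annulus with $f=2$, while a single non-orientable trivial loop $[e,-e]$ is a M\"obius band with $f=1$. In both cases $f=m+1$, which settles the statement for $n=1$ (and, trivially, for the bare vertex disc when $n=0$), giving the base of the induction.

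Next comes the key structural observation. Since every loop of $B_n$ is trivial, the loops are pairwise non-interlaced, so their end-pairs form a laminar (properly nested) family around the vertex boundary. I would argue that such a family always contains an innermost loop $e$, i.e.\ one whose two ends are adjacent in the signed rotation: choose $e$ to minimise the number of half-edge ends lying in one of the two arcs cut off by its ends; any further end in that arc would, by non-interlacement, force both ends of its loop into the same arc, contradicting minimality, so the arc is empty and the two copies of $e$ are consecutive. The signed rotation of $B_n$ then contains the contiguous block $[e,e]$ or $[e,-e]$, and splitting the cyclic rotation immediately before and after this block exhibits $B_n=B_{n-1}\vee L_e$, where $B_{n-1}$ is $B_n$ with the loop $e$ deleted and $L_e$ is the corresponding single-loop bouquet.

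Finally I would combine these facts. Deleting $e$ leaves the remaining loops pairwise non-interlaced, so $B_{n-1}$ again has all trivial loops and the induction hypothesis applies. Applying $f(B_n)=f(B_{n-1})+f(L_e)-1$: if $e$ is orientable then $f(L_e)=2$ and $B_{n-1}$ has $m-1$ orientable loops, giving $f(B_n)=f(B_{n-1})+1=(m-1+1)+1=m+1$; if $e$ is non-orientable then $f(L_e)=1$ and $B_{n-1}$ still has $m$ orientable loops, giving $f(B_n)=f(B_{n-1})=m+1$. Either way the claim follows. The delicate point, and the step I would write out most carefully, is the laminar/innermost argument together with the verification that the contiguous block $[e,e]$ (resp.\ $[e,-e]$) genuinely realises $B_n$ as a one-vertex-join $B_{n-1}\vee L_e$; the non-orientable case can alternatively be dispatched via Operation 4 and Proposition \ref{pro03}, which removes the adjacent pair without changing $f$.
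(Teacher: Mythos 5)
Your proof is correct, and at its core it runs on the same engine as the paper's: peel off one trivial loop at a time, evaluate the single-loop bouquets $f([e,e])=2$ and $f([e,-e])=1$, and track boundary components through the one-vertex-join formula $f(P\vee Q)=f(P)+f(Q)-1$. The one genuine difference is the step you yourself flag as delicate: the laminar/innermost-loop selection. The paper shows this is avoidable. It takes an \emph{arbitrary} trivial loop $e$ and writes the rotation as $[e,M,e,N]$; since $e$ is trivial, every other loop has both ends in $M$ or both in $N$, so $B_n$ decomposes directly as the three-part join $[e,e]\vee[M]\vee[N]$ (similarly in the non-orientable case), giving $f(B_n)=f([MN])+1$, respectively $f(B_n)=f([MN])$, with every loop of $[MN]$ still trivial, and iterating yields $f(B_n)=m+1$. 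In other words, allowing the join to happen at an interior arc of the rotation absorbs exactly the nesting that your innermost-loop argument was designed to eliminate, so consecutive ends are never needed. What your version buys in exchange is a cleaner single-step induction $B_n=B_{n-1}\vee L_e$ in which the smaller bouquet is literally $B_n$ with $e$ deleted (and your check that deletion preserves triviality is the right one to make), at the cost of the extra, correct but heavier, minimality argument; your alternative of removing the block $[e,-e]$ via Operation 4 and Proposition \ref{pro03} is also perfectly sound.
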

\begin{proof}
Consider an edge $e \in E(B_n)$. If $e$ is a trivial orientable loop, suppose the signed rotation of $B_n$ is $[e, M, e, N]$. Then $B_n=[e, e]\vee [M]\vee [N]$. It follows that
\begin{eqnarray*}
  f(B_n)&=& f([e, e])+f([M])+f([N])-2\\
&=&f([M])+f([N])\\
&=&f([M]\vee [N])+1\\
&=&f([MN])+1.
\end{eqnarray*}
The same reasoning applies when $e$ is a trivial non-orientable loop, yielding $f(B_n)=f([MN])$. Note that each loop in $[MN]$ is also trivial. Applying the same argument iteratively to all loops, we conclude that $f(B_n)=m+1$.
\end{proof}

\begin{lemma}\label{lem-02}
Let $B_{n}$ be a bouquet with the signed rotation $$[1, 2, \cdots, n, 1, 2, \cdots, n]$$ and let $A=\{i_1, \cdots, i_k\}\subseteq [n]$ where $k\geq1$. Then, $f((B_{n})^{\times|A})=k.$
\end{lemma}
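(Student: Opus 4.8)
The plan is to translate the statement into a computation of Euler genus and then evaluate that genus by elementary linear algebra over $\mathrm{GF}(2)$. Since $B_n^{\times|A}$ is again a bouquet, Euler's formula (applied exactly as in the proofs of Proposition \ref{pro02} and Theorem \ref{the-03}), with $v=1$, $e=n$ and $c=1$, gives
\[
\varepsilon\bigl(B_n^{\times|A}\bigr)=n+1-f\bigl(B_n^{\times|A}\bigr),
\]
so the claim $f(B_n^{\times|A})=k$ is equivalent to showing $\varepsilon(B_n^{\times|A})=n+1-k$. I would therefore aim to compute this Euler genus directly.

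First I would record the combinatorial type of $B_n^{\times|A}$. In $[1,2,\dots,n,1,2,\dots,n]$ any two loops $i,j$ occur in the alternating order $i,j,i,j$, so every pair is interlaced and the intersection graph is complete; moreover every loop is orientable. Passing to $B_n^{\times|A}$ toggles one sign on each loop in $A$, which makes precisely the loops indexed by $A$ non-orientable while leaving all interlacements unchanged. Because the signed rotation $[1,\dots,n,1,\dots,n]$ is invariant under relabelling the loops, $B_n^{\times|A}$ depends up to isomorphism only on $k=|A|$; hence so does $f(B_n^{\times|A})$, and I may take $A=\{1,\dots,k\}$.

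To evaluate the genus I would use the standard fact that the Euler genus of a bouquet equals the $\mathrm{GF}(2)$-rank of its interlacement matrix $C$, where $C_{ij}=1$ for $i\ne j$ iff loops $i,j$ are interlaced and $C_{ii}=1$ iff loop $i$ is non-orientable. Here $C=J+D$ over $\mathrm{GF}(2)$, with $J$ the all-ones matrix and $D$ the diagonal indicator of $[n]\setminus A$. Thus the rows split into $k$ copies of $\mathbf 1$ (for $i\in A$) and the $n-k$ vectors $\mathbf 1+e_i$ (for $i\notin A$). Since $k\ge 1$, some row equals $\mathbf 1$, hence each $e_i$ with $i\notin A$ also lies in the row space; these $n-k$ standard vectors together with $\mathbf 1$ are independent, because $A\ne\emptyset$ forces $\mathbf 1$ to have support inside $A$, outside $\mathrm{span}\{e_i:i\notin A\}$. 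Therefore $\mathrm{rank}\,C=(n-k)+1=n+1-k$, and substituting into the Euler relation yields $f(B_n^{\times|A})=k$.

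The main obstacle is the genus–rank formula: it is the one nontrivial input and must be cited or proved. If I wanted an argument using only the tools of this paper, I would instead bring $B_n^{\times|A}$ to a normal form by repeatedly applying Operations 1--4, which preserve the number of boundary components by Proposition \ref{pro03}, reducing to a one-vertex-join of elementary bouquets whose boundary counts are known via Lemma \ref{lem-04}, and then adding them through $f(P\vee Q)=f(P)+f(Q)-1$; the bookkeeping of this reduction, rather than any single step, would be the technical burden. Either route must also use the hypothesis $k\ge 1$ honestly: it is exactly the assumption $A\ne\emptyset$ that supplies a pure $\mathbf 1$ row and so removes the parity dependence (on $n$) of the rank, equivalently of the genus, that appears for the all-orientable bouquet $B_n$ when $A=\emptyset$.
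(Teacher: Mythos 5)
Your proof is correct, but it takes a genuinely different route from the paper. You reduce the lemma to the known theorem that the Euler genus of a bouquet equals the $\mathrm{GF}(2)$-rank of its signed interlacement matrix (diagonal entries marking non-orientable loops), after which the claim is a two-line rank computation: the rows are $\mathbf 1$ (for $i\in A$) and $\mathbf 1+e_i$ (for $i\notin A$), giving rank $n+1-k$ exactly when $A\neq\emptyset$. The paper instead stays inside its own toolkit: it applies Operation 2 once to the signed rotation of $(B_n)^{\times|A}$, which ``unrolls'' the bouquet into one in which every loop is trivial and exactly $k-1$ loops are orientable, and then invokes Proposition \ref{pro03} (the operations preserve boundary count) and Lemma \ref{lem-04} to conclude $f=(k-1)+1=k$. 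Your approach buys brevity and robustness — the same rank computation with $A=\emptyset$ immediately recovers the parity dichotomy of Lemma \ref{lem-01}, and it makes transparent why $k\geq 1$ kills the parity dependence — but it hinges on one external input (the genus--rank formula, a Cohn--Lempel/trip-matrix type theorem) that is not proved or cited in the paper, so you would need to supply a reference or a proof; the paper's argument is longer but self-contained. Two small points: your ``relabelling'' reduction to $A=\{1,\dots,k\}$ is not actually justified as stated (the automorphism group of the double occurrence word $[1,\dots,n,1,\dots,n]$ is only dihedral, so not every $k$-subset maps to $\{1,\dots,k\}$), but this step is also never used, since your matrix computation works verbatim for arbitrary $A$; and the phrase ``$\mathbf 1$ has support inside $A$'' should read that $\mathbf 1$ is nonzero on the nonempty set $A$, hence lies outside $\mathrm{span}\{e_i : i\notin A\}$.
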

\begin{proof}
First, observe that the signed rotation of $(B_n)^{\times|A}$ is given by
\begin{gather*}
[1, 2, \cdots, i_1-1, -i_1, i_1+1, \cdots, i_k-1, -i_k, i_k+1, \cdots, n, \\
1, 2, \cdots, i_1-1, i_1, i_1+1, \cdots, i_k-1, i_k, i_k+1, \cdots, n].
\end{gather*}
Let \[P=[i_1+1, \cdots, i_k-1, -i_k, i_k+1, \cdots, n, 1, 2, \cdots, i_1-1].\] Then, we can rewrite $(B_n)^{\times|A}$ as \[[1, 2, \cdots, i_1-1, -i_1, P, i_1, i_1+1, \cdots, i_k-1, i_k, i_k+1, \cdots, n].\] By applying Operation 2 (with $a = i_1$ and $b = P$), we obtain \[(B_{n})^{\times|A}\mapsto[1, 2, \cdots, i_1-1, P^{-1}, -i_1, i_1, i_1+1, \cdots, i_k-1, i_k, i_k+1, \cdots, n].\] This result in the new bouquet $B'_n$, which has the form:
\begin{gather*}
[1, 2, \cdots, i_1-1, -(i_1-1), -(i_1-2), \cdots, -2, -1, -n, \cdots, -(i_k+1), i_k,\\ -(i_k-1), \cdots, -(i_1+1), -i_1, i_1, i_1+1, \cdots, i_k-1, i_k, i_k+1, \cdots, n].
\end{gather*}
By Proposition \ref{pro03}, we have $f((B_{n})^{\times|A})=f(B'_n)$. Since each loop in $B'_n$ is trivial and the set of orientable loops in $B'_n$ is $\{i_2, \cdots, i_k\}$, it follows form  Lemma \ref{lem-04} that \[f(B'_n)=|\{i_2, \cdots, i_k\}|+1=k.\]
Therefore, $f((B_{n})^{\times|A})=k$.
\end{proof}

\begin{lemma}[\cite{QYJ2}]\label{lem-01}
Let $B_{n}$ be a bouquet with the signed rotation $$[1, 2, \cdots, n, 1, 2, \cdots, n].$$ Then
\begin{eqnarray*}
f(B_{n})=\left\{\begin{array}{ll}
                    1, & \mbox{if}~n~\mbox{is even,}\\
                    2, & \mbox{if}~n~\mbox{is odd.}
                   \end{array}\right.
\end{eqnarray*}
\end{lemma}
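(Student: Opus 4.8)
The plan is to exploit the fact that every loop of $B_n$ is orientable—each label in $[1,2,\ldots,n,1,2,\ldots,n]$ carries two $+$ signs—so that $B_n$ is an orientable ribbon graph whose boundary components can be read off from a single boundary-tracing permutation. First I would fix the $2n$ edge-ends met while travelling once around the vertex, writing the two ends of edge $i$ as $i_1$ (first occurrence) and $i_2$ (second occurrence), so that their cyclic order is $1_1,2_1,\ldots,n_1,1_2,2_2,\ldots,n_2$. Let $\rho$ be the $2n$-cycle sending each end to the next one in this cyclic order, and let $\alpha$ be the involution swapping $i_1\leftrightarrow i_2$ for each $i$. As in the alternating edge-line-segment/vertex-line-segment description of a boundary component used in the proof of Lemma \ref{lem03}, each boundary component of $B_n$ corresponds to one cycle of $\varphi:=\rho\alpha$ (apply $\alpha$ first, then $\rho$), so it suffices to count the cycles of $\varphi$.

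Next I would compute $\varphi$ explicitly. A direct check gives $\varphi(i_1)=(i+1)_2$ and $\varphi(i_2)=(i+1)_1$ for $1\le i\le n-1$, together with the two exceptional ``seam'' values $\varphi(n_1)=1_1$ and $\varphi(n_2)=1_2$. In words, $\varphi$ advances the index by one modulo $n$ and toggles the subscript at every step except the single wraparound step $n\mapsto 1$, where the subscript is preserved. Starting from $1_1$ and following $\varphi$, the index therefore cycles $1\to 2\to\cdots\to n\to 1$, and one full pass around the indices performs exactly $n-1$ subscript-toggles (one at each step $i\mapsto i+1$ with $i<n$, and none at the seam).

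The whole statement then reduces to a parity count. If $n$ is odd, then $n-1$ is even, so after one pass the subscript returns to its original value; the orbit of $1_1$ closes up after length $n$, and the $2n$ ends split into two such orbits, giving $f(B_n)=2$. If $n$ is even, then $n-1$ is odd, so one pass flips the subscript, forcing a second pass before the orbit closes; the orbit is then unique and has length $2n$, giving $f(B_n)=1$. This matches the small cases $[1,2,1,2]$ and $[1,2,3,1,2,3]$, which can be traced by hand as a sanity check.

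The main obstacle is the first step: justifying the precise boundary-tracing rule, that is, that the cycles of $\varphi=\rho\alpha$ are exactly the boundary components, and above all getting the exceptional seam values $\varphi(n_1)=1_1$ and $\varphi(n_2)=1_2$ right. Everything downstream—in particular the even/odd dichotomy—is driven entirely by that one non-toggling step, so a convention error there would swap the two cases. Once the permutation is correctly pinned down, the remaining argument is the elementary parity count above. An alternative, more in keeping with Operations 1--4 and Proposition \ref{pro03}, would be to reduce $B_n$ to a bouquet whose loops are all trivial and then invoke Lemma \ref{lem-04}; I expect this to give the same answer but to be combinatorially messier than the boundary-tracing approach.
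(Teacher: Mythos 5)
Your proof is correct, but note that this paper gives no proof of Lemma \ref{lem-01} at all: it is imported verbatim from \cite{QYJ2}, so any self-contained argument is necessarily a different route from the paper's. Your route is the classical Heffter--Edmonds face-tracing for rotation systems: since every loop of $B_n$ carries two $+$ signs, $B_n$ is an orientable ribbon graph, and its boundary components are exactly the orbits of $\varphi=\rho\alpha$ on the $2n$ edge-ends; this correspondence is standard (see, e.g., \cite{GT}), so invoking it is not a gap. Your explicit computation of $\varphi$ is right, including the two seam values $\varphi(n_1)=1_1$ and $\varphi(n_2)=1_2$ (a quick check at $n=2$ gives the single orbit $1_1\to 2_2\to 1_2\to 2_1\to 1_1$, so $f=1$, as required), and the parity count then correctly yields two orbits of length $n$ when $n$ is odd and one orbit of length $2n$ when $n$ is even, matching the stated dichotomy. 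Comparing the approaches: your argument is elementary and self-contained, and it isolates the even/odd behaviour in the single non-toggling seam step, which makes the mechanism of the dichotomy transparent; the paper's citation of \cite{QYJ2} keeps its exposition short. The alternative you sketch at the end --- reduce by Operations 1--4 (using Proposition \ref{pro03}) to a bouquet whose loops are all trivial and then apply Lemma \ref{lem-04} --- is indeed viable and is essentially how the companion Lemma \ref{lem-02} is proved for nonempty $A$; Lemma \ref{lem-01} covers precisely the case $A=\emptyset$ that Lemma \ref{lem-02} excludes, so your face-tracing proof fills a genuine gap in self-containment rather than duplicating anything in the text.
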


\begin{theorem}
Let $K_n$ be a complete graph with $n$ vertices, where $n \geq 2$.
Then
\[P_{K_n}^{\times}(z)=
\begin{cases}
z^{n} + \sum_{i = 1}^{n} \binom{n}{n + 1 - i} z^{i}, & \text{if } n \text{ is even,} \\
z^{n - 1} + \sum_{i = 1}^{n} \binom{n}{n + 1 - i} z^{i}, & \text{if } n \text{ is odd.}
\end{cases}
\]
\end{theorem}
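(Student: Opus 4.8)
The plan is to evaluate $P_{K_n}^{\times}(z)$ directly on a conveniently chosen bouquet representing $K_n$, and then read off the Euler genus of each partial Petrial via Euler's formula together with the two boundary-count lemmas already established. Since $P_{K_n}^{\times}(z)$ is well defined by Theorem \ref{the-01}, I am free to pick any bouquet $B$ with $I(B)=K_n$. I would take
$$B_n=[1, 2, \cdots, n, 1, 2, \cdots, n],$$
the bouquet of Lemmas \ref{lem-02} and \ref{lem-01}. First I would check that $I(B_n)=K_n$: in this signed rotation each loop $i$ occupies the positions $i$ and $n+i$, so for any $i<j$ the four relevant half-edges appear in the cyclic order $i,j,i,j$, which means every pair of loops is interlaced and hence $I(B_n)$ is complete. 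By Definition \ref{def-01}, $P_{K_n}^{\times}(z)={^\partial\varepsilon^{\times}_{B_n}(z)}=\sum_{A\subseteq E(B_n)}z^{\varepsilon(B_n^{\times|A})}$.

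Next I would convert Euler genus into boundary counts. Since $B_n^{\times|A}$ is a connected bouquet with $v=1$ and $e=n$ for every $A$, Euler's formula gives $\varepsilon(B_n^{\times|A})=n+1-f(B_n^{\times|A})$, so it remains only to determine $f(B_n^{\times|A})$ for each $A\subseteq[n]$. I would split the sum according to whether $A$ is empty. For a nonempty subset $A$ with $|A|=k$, Lemma \ref{lem-02} yields $f(B_n^{\times|A})=k$, hence $\varepsilon(B_n^{\times|A})=n+1-k$; as there are exactly $\binom{n}{k}$ subsets of size $k$, the total contribution of the nonempty subsets is
$$\sum_{k=1}^{n}\binom{n}{k}z^{n+1-k}=\sum_{i=1}^{n}\binom{n}{n+1-i}z^{i},$$
where the second form comes from the substitution $i=n+1-k$.

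Finally I would treat the empty set separately using Lemma \ref{lem-01}: $f(B_n)=1$ when $n$ is even and $f(B_n)=2$ when $n$ is odd, so $A=\emptyset$ contributes $z^{n}$ in the even case and $z^{n-1}$ in the odd case. Adding this single term to the sum above gives precisely the two claimed formulas.

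The computation is short once the lemmas are in hand, so there is no serious obstacle; the only points requiring care are verifying that the all-positive rotation $B_n$ really realizes $K_n$ (rather than some other circle graph) and performing the index shift $i=n+1-k$ cleanly so that the binomial coefficients line up with the stated generating function. The genuine difficulty of the result is already absorbed into Lemmas \ref{lem-02} and \ref{lem-01}, which isolate the empty-set case (where the parity of $n$ enters) from the uniform behavior $f=|A|$ on the nonempty subsets.
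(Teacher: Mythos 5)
Your proposal is correct and follows essentially the same route as the paper: both use the bouquet $[1,2,\ldots,n,1,2,\ldots,n]$, apply Lemma \ref{lem-02} for nonempty subsets and Lemma \ref{lem-01} for the empty set, and convert boundary counts to Euler genus via Euler's formula. The only cosmetic difference is that you sum over subsets (split by $|A|$) while the paper collects subsets by genus value; your explicit check that $I(B_n)=K_n$ is a small bonus the paper leaves implicit.
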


\begin{proof}
Let $B_{n}$ be a bouquet with the signed rotation $$[1, 2, \cdots, n, 1, 2, \cdots, n].$$
Since $I(B_n)$ is a complete graph, we have $P_{K_n}^{\times}(z)={^\partial\varepsilon^{\times}_{B_n}(z)}$.
By Lemmas \ref{lem-02} and \ref{lem-01}, in conjunction with Euler's formulas, we obtain the following two cases:

\begin{description}
  \item[Case 1: $n$ is even]
  \begin{enumerate}
    \item $\varepsilon((B_{n})^{\times|A}) = n$ if and only if $f((B_{n})^{\times|A}) = 1$, which holds true when $|A| = 0$ or $1$. In this case, $A$ can be chosen in $\binom{n}{0} + \binom{n}{1} = 1 + n$ ways.
    \item $\varepsilon((B_{n})^{\times|A}) = i$ (for $1 \leq i \leq n-1$) if and only if $f((B_{n})^{\times|A}) = n + 1 - i$, which occurs when $|A| = n + 1 - i$. Here, $A$ can be chosen in $\binom{n}{n + 1 - i}$ ways.
  \end{enumerate}

  \item[Case 2: $n$ is odd]
  \begin{enumerate}
    \item $\varepsilon((B_{n})^{\times|A}) = n - 1$ if and only if $f((B_{n})^{\times|A}) = 2$, which is valid when $|A| = 0$ or $2$. In this scenario, $A$ can be chosen in $\binom{n}{0} + \binom{n}{2} = 1 +\binom{n}{2}$ ways.
    \item $\varepsilon((B_{n})^{\times|A}) = i$ (for $i = n$ or $1 \leq i \leq n - 2$) if and only if $f((B_{n})^{\times|A}) = n + 1 - i$, which holds when $|A| = n + 1 - i$. In such cases, $A$ can be chosen in $\binom{n}{n + 1 - i}$ ways.
  \end{enumerate}
\end{description}

\end{proof}

\section{Partial Petrial polynomials for paths}

Let $F_n~(n\geq 0)$ be a bouquet with the signed rotation
\[
[e_1, e_2, e_1, e_3, e_2, \ldots, e_{n-1}, e_{n-2}, e_n, e_{n-1}, e_n].
\] Figure \ref{Fig04} shows the bouquets $F_{6}$.
\begin{figure}[htbp]
  \centering
  \includegraphics[width=5cm]{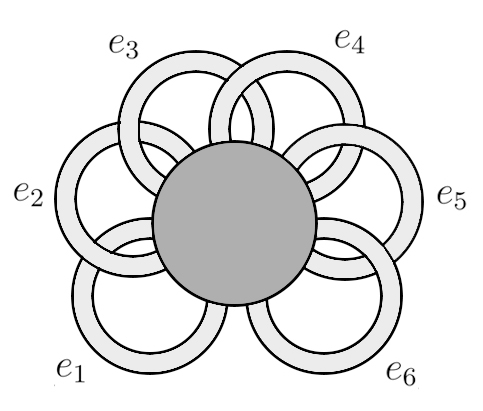}\\
  \caption{The bouquet $F_{6}$.}\label{Fig04}
\end{figure}

\begin{lemma}\label{lem-05}
Let $B_n$ be a partial Petrial of  $F_n$. Then $B_n$ can be transformed into either an isolated vertex or a vertex with exactly one orientable loop by a sequence of operations 1-4.

\end{lemma}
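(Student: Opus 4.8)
The plan is to understand the structure of $F_n$ and then analyze how its partial Petrials behave under operations 1--4. First I would examine the signed rotation of $F_n$ more carefully. Reading off the pattern
\[
[e_1, e_2, e_1, e_3, e_2, e_4, e_3, \ldots, e_{n-1}, e_{n-2}, e_n, e_{n-1}, e_n],
\]
I observe that the intersection graph $I(F_n)$ is exactly the path $P_n$: each $e_i$ is interlaced only with $e_{i-1}$ and $e_{i+1}$, since the two occurrences of $e_i$ sandwich exactly the single symbol $e_{i-1}$ on one side and are adjacent to $e_{i+1}$ on the other. This ``telescoping'' adjacency structure is what makes the path the relevant circle graph, and it is also what suggests an inductive peeling argument. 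In $F_n$ every loop is non-orientable (all signs implicitly $+$ but each loop contributes two equal signs, making it orientable---so I must be careful here and check the exact sign convention; the key point is that $F_n$ is built so that each edge is interlaced with only its neighbours).

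The main strategy is induction on $n$, peeling off one end of the path. A partial Petrial $B_n$ of $F_n$ toggles some subset of signs, but by Lemma~\ref{lem-06} (or rather by the freedom in choosing the representative) I may focus on the structure near a leaf edge, say $e_1$ or $e_n$. The plan is to locate the two occurrences of an end-edge in the signed rotation and bring them into one of the reducible patterns of Operations 3 or 4. Concretely, if the end loop $e_1$ appears in the pattern $\dots a b \dots a b \dots$ with only its neighbour between its two ends, I would first use Operation 1 or 2 to slide the neighbouring symbols until $e_1$'s two occurrences become adjacent in the form $e_1 e_1$, $(-e_1)e_1$, $e_1 e_1 e_1 e_1$-type, or the interlaced $\dots a b a b \dots$ form; then Operation 3 (removing $abab$) or Operation 4 (removing $(-a)a$) eliminates the loop entirely, reducing to a bouquet on fewer edges whose underlying structure is again of the $F$-type (or a partial Petrial thereof). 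By induction this reduces to the base case $n \leq 1$, where the bouquet is either an isolated vertex or a single loop.

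The delicate point---and the main obstacle---is bookkeeping the signs through Operations 1 and 2 so that after sliding, the end-loop genuinely lands in a form removable by Operation 3 or 4, and so that the remaining bouquet still has the inductive path structure. Operations 3 and 4 require very specific local patterns ($abab$ and $(-a)a$ respectively), and the toggled signs from the partial Petrial may obstruct reaching exactly these. I expect the argument to split into cases according to whether the relevant end-loop is orientable or non-orientable after toggling, and whether its neighbour has been toggled. The claim that we end with \emph{at most one} orientable loop (rather than none) reflects that over $\mathbb{Z}_2$ the parity of non-orientable loops is an invariant under these reductions: Operation 3 removes an interlaced orientable pair and Operation 4 removes a non-orientable loop, so one orientable loop may survive when the total configuration cannot be fully simplified. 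I would track this parity invariant explicitly to justify the dichotomy in the statement. Once every loop has been peeled, what remains is forced to be either the empty bouquet (isolated vertex) or a single orientable loop, completing the induction.
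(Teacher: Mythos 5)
Your overall strategy (induction on $n$, peeling an end loop off the path using Operations 1--4) is indeed the paper's strategy, but your proposal stops exactly where the proof's actual content lies, and the two concrete mechanisms you do offer are flawed. First, the plan ``slide $e_1$'s two occurrences until adjacent, then Operation 3 or 4 eliminates the loop'' breaks down when $e_1$ is orientable: an adjacent orientable pair $e_1e_1$ is a trivial orientable loop, and none of Operations 1--4 can delete it (Operation 3 needs the interlaced pattern $abab$, Operation 4 needs $(-a)a$). This is not a bookkeeping nuisance but the structural heart of the induction: when $e_1$ is orientable one cannot remove $e_1$ alone, and the paper instead removes $e_1$ \emph{together with} $e_2$, dropping from $F_n$ to $F_{n-2}$ --- via Operation 1 followed by Operation 3 when $e_2$ is orientable, or via Operation 1, Operation 2, and two applications of Operation 4 when $e_2$ is non-orientable. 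Only when $e_1$ is non-orientable does one peel a single edge (Operation 2 then Operation 4, landing in a partial Petrial of $F_{n-1}$). Your proposal acknowledges that ``the delicate point'' is this sign bookkeeping but never resolves it, so the induction step is missing.

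Second, the parity invariant you propose to justify the dichotomy is false: Operation 2 rewrites $P(-a)Qba$ as $P(-b)(-a)Qa$, flipping one sign of $b$ and hence toggling whether the loop $b$ is orientable, and Operation 4 deletes one non-orientable loop; both change the parity of the number of non-orientable loops. Fortunately no such invariant is needed --- the dichotomy (isolated vertex or a single orientable loop) falls out of the induction automatically, since the base case $n\leq 1$ yields $[\emptyset]$, $[e_1,e_1]$, or $[-e_1,e_1]\mapsto[\emptyset]$, and each inductive reduction lands in a partial Petrial of $F_{n-1}$ or $F_{n-2}$. (The invariant that does matter, later in the paper, is the number of boundary components, preserved by all four operations per Proposition \ref{pro03}.) To complete your argument you would need to replace the sliding-and-parity sketch with the explicit three-case analysis on the signs of $e_1$ and $e_2$.
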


\begin{proof}
We prove the lemma by induction on $n$.

\noindent{\bf Basis Step ($n \leq 1$)}:
Note that $B_n$ can be $[\emptyset]$, $[e_1, e_1]$, or $[-e_1, e_1]$.
\begin{itemize}
  \item If $B_n = [\emptyset]$, then $B_n$ is an isolated vertex.
  \item If $B_n = [e_1, e_1]$, then $B_n$ is a vertex with exactly one orientable loop.
  \item If $B_n = [-e_1, e_1]$, then applying Operation 4 with $a = e_1$ transforms $B_n$ into $[\emptyset]$, hence an isolated vertex.
\end{itemize}

\noindent{\bf Induction step ($n>1$)}: We consider the following three cases:
\begin{description}
  \item[Case 1] $B_n=[-e_1, e_2, e_1, M]$

$B_n=[-e_1, e_2, e_1, M]\xrightarrow[a=e_1,~b=e_2]{\mbox{\footnotesize Operation 2}}[-e_2, -e_1, e_1, M]\xrightarrow[a=e_1]{\mbox{\footnotesize Operation 4}}[-e_2, M]$.
  \item[Case 2] $B_n=[e_1, e_2, e_1, e_3, e_2, M]$

$B_n=[e_1, e_2, e_1, e_3, e_2, M]\xrightarrow[a=e_1,~b=e_3]{\mbox{\footnotesize Operation 1}}[e_3, e_1, e_2, e_1, e_2, M]\xrightarrow[a=e_1,~b=e_2]{\mbox{\footnotesize Operation 3}}[e_3, M]$.
  \item[Case 3] $B_n=[e_1, -e_2, e_1, e_3, e_2, M]$
\begin{gather*}
B_n=[e_1, -e_2, e_1, e_3, e_2, M]\xrightarrow[a=e_1,~b=e_3]{\mbox{\footnotesize Operation 1}}[e_3, e_1, -e_2, e_1, e_2, M]\xrightarrow[a=e_2,~b=e_1]{\mbox{\footnotesize Operation 2}}~~~~~~~\\
[e_3, e_1, -e_1, -e_2, e_2, M]\xrightarrow[a=e_1]{\mbox{\footnotesize Operation 4}}[e_3, -e_2, e_2, M]\xrightarrow[a=e_2]{\mbox{\footnotesize Operation 4}}[e_3, M].~~~~~~~
\end{gather*}
\end{description}
Note that $[-e_2, M]$ and $[e_3, M]$ are partial Petrials of  $F_{n-1}$ and $F_{n-2}$, respectively. By the induction hypothesis, $[-e_2, M]$ and $[e_3, M]$ can be transformed into either an isolated vertex or a vertex with exactly one orientable loop using a sequence of operations 1-4. Therefore, the same conclusion holds for $B_n$.
\end{proof}

\begin{theorem}\label{the-02}
Let $P_n$ be a path with $n$ vertices, where $n \geq 1$.
Then
\begin{equation*}
P_{P_n}^{\times}(z) =
\begin{cases}
\left(\frac{2^{n}-1}{3}\right)z^{n-1} + \left(\frac{2^{n+1}+1}{3}\right)z^n, & \text{if } n \text{ is even,}\\
\left(\frac{2^{n}+1}{3}\right)z^{n-1} + \left(\frac{2^{n+1}-1}{3}\right)z^n, & \text{if } n \text{ is odd.}
\end{cases}
\end{equation*}
\end{theorem}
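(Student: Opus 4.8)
The plan is to realise the path $P_n$ as the intersection graph of the bouquet $F_n$, so that $P_{P_n}^{\times}(z)={^\partial\varepsilon^{\times}_{F_n}(z)}=\sum_{A\subseteq E(F_n)}z^{\varepsilon(F_n^{\times|A})}$, and then to count, for each attainable value of the Euler genus, the number of subsets $A$ realising it. The first step is to pin down the possible genera. By Lemma \ref{lem-05}, every partial Petrial $F_n^{\times|A}$ can be transformed by operations 1--4 into either the isolated vertex $[\emptyset]$ or the single orientable loop $[e,e]$. Since these operations preserve the number of boundary components (Proposition \ref{pro03}), and $f([\emptyset])=1$ while $f([e,e])=2$, I get $f(F_n^{\times|A})\in\{1,2\}$. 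As $v(F_n^{\times|A})=1$ and $e(F_n^{\times|A})=n$, Euler's formula $\varepsilon=e+2-v-f$ gives $\varepsilon(F_n^{\times|A})=n+1-f\in\{n-1,n\}$. Hence only the terms $z^{n-1}$ and $z^n$ occur, and I may write $P_{P_n}^{\times}(z)=b_n z^{n-1}+a_n z^n$, where $a_n$ (resp.\ $b_n$) counts the subsets $A$ with $f(F_n^{\times|A})=1$ (resp.\ $=2$).

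Next I would set up a recursion for $a_n$ and $b_n$ by revisiting the three cases of the induction step in the proof of Lemma \ref{lem-05}, organised by the membership of $e_1,e_2$ in $A$. When $e_1\in A$ (Case 1), the reduction turns $F_n^{\times|A}$ into a partial Petrial $[-e_2,M]$ of $F_{n-1}$; when $e_1\notin A$ (Cases 2 and 3, according as $e_2\notin A$ or $e_2\in A$), it turns into a partial Petrial $[e_3,M]$ of $F_{n-2}$. In each case I would check that $A\mapsto A'$ (resp.\ $A''$) is a bijection from the relevant family of subsets of $E(F_n)$ onto all subsets of $E(F_{n-1})$ (resp.\ $E(F_{n-2})$): Cases 2 and 3 are immediate since the loops $e_3,\ldots,e_n$ retain their signs, and Case 1 requires only the observation that the sign change forced on $e_2$ by Operations 2 and 4 toggles its membership, which is still a bijection. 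Because operations 1--4 preserve $f$, the $f$-value is carried along each reduction. Summing the three cases then gives
\begin{align*}
a_n &= a_{n-1}+2a_{n-2}, & b_n &= b_{n-1}+2b_{n-2},
\end{align*}
both valid for $n\geq 2$.

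Finally I would record the base cases from the basis step of Lemma \ref{lem-05}: $F_0=[\emptyset]$ gives $a_0=1,\ b_0=0$, and $F_1=[e_1,e_1]$ gives $a_1=1,\ b_1=1$. Both sequences satisfy the linear recurrence with characteristic equation $t^2-t-2=(t-2)(t+1)=0$, so $a_n=\alpha 2^n+\beta(-1)^n$ and similarly for $b_n$; solving for the constants from the initial data yields $a_n=\frac{2^{n+1}+(-1)^n}{3}$ and $b_n=\frac{2^n-(-1)^n}{3}$. Substituting into $P_{P_n}^{\times}(z)=b_n z^{n-1}+a_n z^n$ and separating the parities of $n$ reproduces the two stated formulas.

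I expect the main obstacle to be the bookkeeping in the recursion step, namely verifying that each of the three reduction cases induces a genuine bijection onto all partial Petrials of the smaller bouquet while preserving the number of boundary components. The delicate point is Case 1, where the operations act nontrivially on the sign of $e_2$: one must confirm the map is onto (every orientation of $e_2$ in $F_{n-1}$ is attained) so that the full count $a_{n-1}+b_{n-1}$ is recovered. Once the bijections and the invariance of $f$ are established, the remainder is a routine solution of a second-order linear recurrence.
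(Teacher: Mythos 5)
Your proof is correct, and while it shares the paper's skeleton (realize $P_n$ as $I(F_n)$, then use Lemma \ref{lem-05} and Proposition \ref{pro03} to confine the polynomial to the terms $z^{n-1}$ and $z^n$), your recursion comes from a genuinely different decomposition. The paper works at the \emph{last} edge $e_n$: each $B_{n-1}\in\mathbb{F}_{n-1}=\{W_n-e_n \mid W_n\in\mathbb{F}_n\}$ has exactly two extensions in $\mathbb{F}_n$, and a case analysis shows that exactly one of them reduces to $[e_n,e_n]$ precisely when $B_{n-1}$ reduces to $[\emptyset]$; this gives only the single identity $b_n=a_{n-1}$ (in your notation), which the paper then closes into the first-order recursion $a_{n-1}+a_n=2^n$ via the evaluation $^\partial\varepsilon^{\times}_{F_n}(1)=2^n$. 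You instead peel off the \emph{first} edges, reusing the three reduction cases inside the proof of Lemma \ref{lem-05}: the $e_1\in A$ case is a bijection onto all partial Petrials of $F_{n-1}$ (the $e_2$-toggle you flag is exactly the point to check, and it works since $A\mapsto (A\setminus\{e_1\})\,\triangle\,\{e_2\}$ is bijective), while the two $e_1\notin A$ cases give a two-to-one map onto all partial Petrials of $F_{n-2}$; with Proposition \ref{pro03} carrying the $f$-value along each reduction, this yields the homogeneous recursions $a_n=a_{n-1}+2a_{n-2}$ and $b_n=b_{n-1}+2b_{n-2}$, which you solve by standard means. What each route buys: the paper's pairing argument needs only a first-order recurrence and gets the complementary count for free from $^\partial\varepsilon^{\times}_{F_n}(1)=2^n$; yours treats each coefficient autonomously with no appeal to the $z=1$ evaluation, at the cost of heavier bookkeeping and a second-order recurrence. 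One small point you should make explicit is the $n=2$ instance of your Cases 2--3, where $e_3$ does not exist and the reduction to $F_0=[\emptyset]$ must invoke Operation 3 (resp.\ Operations 2 and 4) directly rather than the generic sequence --- a degenerate case that the paper's own induction for Lemma \ref{lem-05} also passes over silently. With the initial data $a_0=1$, $b_0=0$, $a_1=b_1=1$, both recursions produce the stated formulas, so your argument is sound.
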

\begin{proof}
Since $I(F_n)$ is a path, we have $P_{P_n}^{\times}(z) = {^\partial\varepsilon^{\times}_{F_n}(z)}$.
Let $\mathbb{F}_n$ be the set of all partial Petries of $F_n$. For any $B_n \in \mathbb{F}_n$, by Proposition \ref{pro03} and Lemma \ref{lem-05}, $f(B_n) = f([\emptyset]) = 1$ or $f(B_n) = f([e_n, e_n]) = 2$. By Euler's formula, $\varepsilon(B_n) = n$ or $\varepsilon(B_n) = n-1$. Hence, the polynomial $^\partial\varepsilon^{\times}_{F_n}(z)$ has at most two terms. In the following, we will show that the polynomial has exactly two terms.

Let $\mathbb{F}_{n-1} = \{W_n - e_n \mid W_n \in \mathbb{F}_n\}$. For any $B_{n-1} \in \mathbb{F}_{n-1}$, there exist exactly two bouquets $B_n', B_n'' \in \mathbb{F}_n$ such that $B_n'= (B_n'')^{\times|e_n}$ and $B_{n-1} = B_n' - e_n = B_n'' - e_n$. Assume by symmetry that $e_n$ in $B_n'$ is orientable, and in $B_n''$ is non-orientable. By Lemma \ref{lem-05}, we have the following three cases:

\begin{description}
  \item[Case 1] $B_{n-1}\longmapsto [e_{n-2}, e_{n-1}, e_{n-2}, e_{n-1}] \longmapsto [\emptyset]$

$B_n'\longmapsto[e_{n-2}, e_{n-1}, e_{n-2}, e_{n}, e_{n-1}, e_{n}] \longmapsto [e_{n}, e_{n}]$

$B_n''\longmapsto[e_{n-2}, e_{n-1}, e_{n-2}, -e_{n}, e_{n-1}, e_{n}] \longmapsto [-e_{n}, e_{n}]\longmapsto [\emptyset]$.

  \item[Case 2] $B_{n-1}\longmapsto [e_{n-1}, -e_{n-1}] \longmapsto [\emptyset]$

$B_n'\longmapsto[e_{n-1}, e_{n}, -e_{n-1}, e_{n}]\longmapsto [e_{n}, -e_{n}]\longmapsto [\emptyset]$

$B_n''\longmapsto[e_{n-1}, -e_{n}, -e_{n-1}, e_{n}]\longmapsto [e_{n}, e_{n}]$.

  \item[Case 3] $B_{n-1}\longmapsto [e_{n-1}, e_{n-1}]$

$B_n'\longmapsto[e_{n-1}, e_{n}, e_{n-1}, e_{n}]\longmapsto [\emptyset]$

$B_n''\longmapsto[e_{n-1}, -e_{n}, e_{n-1}, e_{n}]\longmapsto [\emptyset]$.
\end{description}

Let $\{\mathbb{F}_n\longmapsto [\emptyset]\}$ and $\{\mathbb{F}_n\longmapsto [e_n, e_n]\}$ be the sets of bouquets in $\mathbb{F}_n$ that can change into $[\emptyset]$ and $[e_n, e_n]$, respectively. By the above three cases, there is a one-to-one correspondence between $\{\mathbb{F}_{n-1}\longmapsto [\emptyset]\}$ and $\{\mathbb{F}_n\longmapsto [e_n, e_n]\}$.
Hence, \[|\{\mathbb{F}_{n-1}\longmapsto [\emptyset]\}|=|\{\mathbb{F}_n\longmapsto [e_n, e_n]\}|.\]
Therefore,
\begin{eqnarray*}
^\partial\varepsilon^{\times}_{F_1}(z)&=&a_{0}+a_1z~=~1+z,\\
^\partial\varepsilon^{\times}_{F_2}(z)&=&a_{1}z+a_2z^2,\\
\vdots\\
^\partial\varepsilon^{\times}_{F_{n-1}}(z)&=&a_{n-2}z^{n-2}+a_{n-1}z^{n-1},\\
^\partial\varepsilon^{\times}_{F_n}(z)&=&a_{n-1}z^{n-1}+a_nz^n,
\end{eqnarray*}
Since $^\partial\varepsilon^{\times}_{F_n}(1)=2^n$, we deduce that the sequence $\{a_n\}$ satisfies the recursion relation: $a_0=1$ and $a_{n-1}+a_n=2^n$.
Using this recursion relation, we obtain
\begin{equation*}
P_{P_n}^{\times}(z)={^\partial\varepsilon^{\times}_{F_n}(z)} =
\begin{cases}
\left(\frac{2^{n}-1}{3}\right)z^{n-1} + \left(\frac{2^{n+1}+1}{3}\right)z^n, & \text{if } n \text{ is even,}\\
\left(\frac{2^{n}+1}{3}\right)z^{n-1} + \left(\frac{2^{n+1}-1}{3}\right)z^n, & \text{if } n \text{ is odd.}
\end{cases}
\end{equation*}
\end{proof}

\section{Concluding remarks}
In Theorem \ref{the-03}, we prove that for a connected graph with $n$ vertices ($n \geq 2$), the partial Petrial polynomial has non-zero coefficients for all terms of degrees from 1 to $n$ if and only if the graph is complete. Furthermore, Theorem \ref{the-02} establishes that the partial Petrial polynomial of a path is a binomial. This raises the question of whether the converse holds:
\begin{problem}
If the partial Petrial polynomial of a connected graph $G$ is a binomial, must $G$ necessarily be a path? If not, can we characterize the graphs whose partial Petrial polynomials are binomials?
\end{problem}

\section*{Acknowledgements}
This work is supported by NSFC (Nos. 12471326, 12101600).

%\section*{References}

\end{document}